\newtheorem{theorem}{Theorem}
\newtheorem{lemma}{Lemma}
\newtheorem{eg}{Example}
\newtheorem{definition}{Definition}
\providecommand{\U}[1]{\protect\rule{.1in}{.1in}}
\definecolor{lime}{HTML}{A6CE39}
\DeclareRobustCommand{\orcidicon}{%
	\begin{tikzpicture}
	\draw[lime, fill=lime] (0,0) 
	circle [radius=0.16] 
	node[white] {{\fontfamily{qag}\selectfont \tiny ID}};	\draw[white, fill=white] (-0.0625,0.095) 
	circle [radius=0.007];	\end{tikzpicture}
	\hspace{-2mm}}
\xdef\csname orcid\x\endcsname{\noexpand\href{https://orcid.org/\csname orcidauthor\x\endcsname}{\noexpand\orcidicon}}
\title{Floorplanning with I/O Assignment via Feasibility-Seeking and Superiorization Methods}
\author{Shan Yu, 
Yair Censor, 
and Guojie Luo
\thanks{
This article is an extended version of our conference report~\cite{yu2023} presented at the International Symposium of EDA, ISEDA-2023, Nanjing, China, May 8-11, 2023. This research is supported by National Natural Science Foundation of China (NSFC) under Grant No. 11961141007 and by the Israeli Science Foundation (ISF) under Grant No. 2874/19 within the NSFC-ISF joint research program. The work of Y.C. is also supported by the U.S. National Institutes of Health Grant No. R01CA266467 and by the Cooperation Program in Cancer Research of the German Cancer Research Center (DKFZ) and the Israeli Ministry of Innovation, Science and Technology (MOST). Additionally, the work of G.L. is supported by National Key R\&D Program of China under Grant No. 2022YFB4500500. \textit{Corresponding Author: Guojie Luo.}
We are grateful to our colleagues Prof. Tie Zhou and Prof. Jiansheng Yang at the Peking University for their helpful comments.
We thank Xinming Wei, Bizhao Shi and Sunan Zou at the Peking University for their helpful suggestions throughout the preparation of this manuscript.
Shan Yu is with the Department of Information and Computational Sciences, School of Mathematical Sciences, Peking University, Beijing, China. Corresponding email: yu-shan@pku.edu.cn.
Yair Censor is with the Department of Mathematics, University of Haifa, Haifa, Israel. Corresponding email: yair@math.haifa.ac.il.
Guojie Luo is with the National Key Laboratory for Multimedia Information Processing, School of Computer Science, Peking University, Beijing, China. Dr. Luo is also with the Center for Energy-efficient Computing and Applications, Peking University, Beijing, China. Corresponding email: gluo@pku.edu.cn.
}
}
\begin{document}
\begin{CJK*}{UTF8}{gbsn}
\maketitle

\begin{abstract}
The feasibility-seeking approach offers a systematic framework for managing and resolving intricate constraints in continuous problems, making it a promising avenue to explore in the context of floorplanning problems with increasingly heterogeneous constraints.
The classic legality constraints can be expressed as the union of convex sets. However, conventional projection-based algorithms for feasibility-seeking do not guarantee convergence in such situations, which are also heavily influenced by the initialization. We present a quantitative property about the choice of the initial point that helps good initialization and analyze the occurrence of the oscillation phenomena for bad initialization. In implementation, we introduce a resetting strategy aimed at effectively reducing the problem of algorithmic divergence in the projection-based method used for the feasibility-seeking formulation. Furthermore, we introduce the novel application of the superiorization method (SM) to floorplanning, which bridges the gap between feasibility-seeking and constrained optimization. The SM employs perturbations to steer the iterations of the feasibility-seeking algorithm towards feasible solutions with reduced (not necessarily minimal) total wirelength. Notably, the proposed algorithmic flow is adaptable to handle various constraints and variations of floorplanning problems, such as those involving I/O assignment. 

To evaluate the performance of Per-RMAP, we conduct comprehensive experiments on the MCNC benchmarks and GSRC benchmarks. The results demonstrate that we can obtain legal floorplanning results 166$\times$ faster than the branch-and-bound (B\&B) method while incurring only a 5\% wirelength increase compared to the optimal results. Furthermore, we evaluate the effectiveness of the algorithmic flow that considers the I/O assignment constraints, which achieves an 6\% improvement in wirelength. 
Besides, considering the soft modules with a larger feasible solution space, we obtain 15\% improved runtime compared with PeF, the state-of-the-art analytical method. Moreover, we compared our method with Parquet-4 and Fast-SA on GSRC benchmarks which include larger-scale instances. The results highlight the ability of our approach to maintain a balance between floorplanning quality and efficiency.
\end{abstract}

\textbf{Keywords: }Feasibility-seeking, local convergence, non-convex, superiorization method, projection algorithms, floorplanning, I/O assignment

\section{Introduction}
In the modern very-large-scale integration (VLSI) electronic design automation (EDA) flow, floorplanning is a critical stage in the physical design with significant impacts on the quality of downstream stages. With the functional modules  (e.g., intellectual property cores, embedded memories, and clusters of standard cells) and the connecting nets, the task of floorplanning~\cite{Kahng2011} is to find a legal placement within a fixed-outline rectangular region, where the basic constraint is non-overlapping between any two modules and the optimization objective is usually minimizing the total wirelength of nets. This problem is known to be challenging~\cite{Adya2003}, and its complexity is further compounded by the need to consider various factors to enhance the effectiveness of integrated circuits. 

Prior floorplanning studies can be broadly divided into discrete meta-heuristic methods, discrete exact methods (e.g., branch-and-bound (B\&B) methods), continuous analytical methods, and learning-based methods.

Discrete meta-heuristic methods use discrete representation and heuristics but struggle with diverse constraints. The reason is that complex heuristics accumulation would result in the unpredictability of tools~\cite{kahng2018}. Discrete exact methods systematically search solutions in a discrete space with pruning strategies to reduce computational effort. However, they still require significant time to explore the vast search space. Analytical methods formulate floorplanning as a constrained optimization problem. With constraints added as a penalty to the objectives, it is often converted to an unconstrained programming problem, leading to challenges in balancing constraints and objectives. Learning-based methods, which utilize techniques such as reinforcement learning and graph neural network, focus on the netlist representation and feature extraction, with limited exploration of multi-constraint problems.

In summary, two challenges still exist for current floorplanning methods. 
Firstly, aiming for the optimal solution is not reasonable because the resolving process requires needless expenditure of time, energy, and resources, while the favorable final results are still not guaranteed after the lengthy EDA flow.
Secondly, growing diverse constraints add great complexity to the design of the methods. These constraints include temperature management~\cite{ma21}, congestion alleviation~\cite{Ahn2011}, input/output (I/O) assignment~\cite{yu2023}, voltage island~\cite{Lin2014}, blockage regions~\cite{funke2016} and so on. Moreover, the continual advancements in package technologies and the increasing emphasis on cross-stage constraints~\cite{Kahng21} are leading to a significant growth in the number and complexity of constraints involved in floorplanning.

To overcome these challenges, we model the floorplanning as a \textbf{feasibility-seeking problem (FSP)}, wherein constraints are established through a finite family of nonempty, frequently closed constraint sets, aiming to locate a point that satisfies their collective intersection. Compared to previous approaches, the focus is shifted to the constraints rather than objectives, in the spirit of~\cite{Yair2023}, which makes the scalability to diverse constraints possible.

Support for such an approach is Simon's concept of ``satisfice''~\cite{simon1956}. Instead of adopting an approach that seeks optimal solutions within a simplified model, it is better to adopt an approach that seeks satisfactory solutions under more realistic and complex constraints. We adopt the philosophy of ``satisficing'' rather than optimizing to solve floorplanning with growing diverse constraints.

In the field of FSPs, the \textbf{method of alternating projection (MAP)}~\cite{von1951}\cite{Escalante2011} is a widely used method. It is flexible, computationally efficient, and of theoretical interest. It involves iterative projections onto individual sets in a sequential manner or in another ordering regime. With proper construction of individual constraints, it is fast, easy to implement, and enables to tackle complex constraints sets.
However, the constraints of the floorplanning problem are non-convex and the initial convergence behavior is affected by the initial point selection, exhibiting certain phenomena when unable to find a feasible solution. This motivates us to design a generalized MAP called the resettable method of alternating projection (RMAP).

To enhance the performance of a given FSP-based algorithm, the \textbf{superiorization method (SM)} offers a viable approach for obtaining superior (not necessarily optimal) feasible solutions. This method employs perturbations to guide a feasibility-seeking algorithm towards a feasible output while simultaneously improving a user-chosen objective function.
This article designs an SM called the perturbed resettable method of alternating projection (Per-RMAP) to further improve the feasible solution in terms of the total wirelength.

In this article, our contributions are summarized as follows:

\begin{itemize}
    \item We formulate floorplanning as an FSP, which promises a balance between solution quality and efficiency, as we prioritize feasible-seeking rather than optimizing. 
    \item We analyze the property and challenge of applying MAP to floorplanning, focusing on its initial convergence behavior. We prove the local convergence of MAP and analyze the occurrence of oscillations, offering valuable insights for the algorithm design.
    \item We propose our global floorplanner Per-RMAP to enhance MAP. We design a resetting strategy to improve the initial convergence behavior of the MAP. We also add perturbation steps to the original FSP algorithm to reduce the total wirelength. 
    \item Experiments on MCNC benchmark demonstrate that Per-RMAP could address complex design constraints. Compared to B\&B, it achieves a significant speedup of 166$\times$ with a merely 5\% increase in wirelength, and decreases wirelength by 6\% with a $\sim$100$\times$ speedup with I/O assignment constraints.\footnote{The B\&B method for comparison does not consider I/O assignment. The search space and pruning strategies have to be completely re-defined and re-implemented when considering extra variables or constraints in the B\&B method. In contrast, the FSP method readily adapts to a new formulation.} For soft-module floorplanning, our method reduces 15\% runtime compared with PeF, the state-of-the-art analytical method.
\end{itemize}

In the article we strengthen the results of the conference report~\cite{yu2023}. We analyze the property of the constraints sets and prove the local convergence of MAP, which offers the theoretical support and motivation of the resetting strategy design. Furthermore, we enhance the efficiency of Per-RMAP and conduct more comprehensive experiments to fully demonstrate the superior performance of Per-RMAP.

The remainder of the paper is organized as follows: Sec.~\ref{sec:background} introduces the preliminaries and related works. Sec.~\ref{sec:formulation} gives the formulation of floorplanning as FSP. Sec.~\ref{sec:MAP-property-and-challenges} presents features of applying MAP to floorplanning. Sec.~\ref{sec:PerRMAP} describes the proposed Per-RMAP algorithm. Sec.~\ref{sec:experiment} presents experimental results, followed by concluding comments in Sec.~\ref{sec:conclusion} and supplementary materials in Appendix~\ref{sec:example-n5} and \ref{sec:more-local-convergence}.
\section{Background}\label{sec:background}
In this section, we introduce the FSP, MAP, and SM briefly and then review some related works.

\subsection{Preliminaries}
\textbf{FSP} is a modeling process that represents the problem by a system of constraints. Given $M$ subsets $C_{i},i=1,2,\ldots,M$, of the N-dimensional Euclidean space $\mathbb{R}^{N}$, the FSP is to find a point in the intersection of these sets, i.e.,
\begin{align}
    \text{find }z\in \cap_{i=1}^M C_{i}. \label{eq:fsp}
\end{align}

Given a set $C\subset\mathbb{R}^{N}$, the (metric) projection of a point $z\in\mathbb{R}^{N}$ onto it is the set-valued mapping $\mathcal{P}_{C}$:
\begin{equation}
\mathcal{P}_{C}(z) = \left\{c \in C\ \vert\ \|z-c\| = \mathrm{d}(z,C)\right\},
\end{equation}
where $\|\cdot\|$ is the Euclidean norm, and $\mathrm{d}(z,C)=\mathop{\min}\left\{\|z-c\|\ \vert\ c\in C\right\}$. When $C$ is convex, $\mathcal{P}_{C}(z)$ is a singleton. When $C$ is non-convex, $\mathcal{P}_{C}(z)$ may contain multiple points, which is the situation in our case, as described below.

\textbf{MAP}~\cite{von1951}\cite{Escalante2011} is a widely used algorithm to tackle the FSP. It uses sequential projections iteratively onto the individual sets of the family of constraints in a predetermined order. 

The MAP iterative process for the FSP (\ref{eq:fsp}) is:
\begin{align}
\begin{split}
    (\forall n \in \mathbb{N}) \quad z^{n+1}
    & \in z^{n}+\lambda_{n} \left(\mathcal{P}_{t_{n}}\left(z^{n}\right)-z^{n}\right),\\
    & = \mathcal{T}_{t_{n}}(z^{n};\lambda_n),\label{eq:relaxed-projection}
\end{split}
\end{align}
where $C_{t_n}$ is the constraint for projection in the $n$-th iteration, point $z^{n+1}$ is obtained by choosing a projection\footnote{In the sequel, for any set $C_t$ with a subscript, we abbreviate the projection operator $\mathcal{P}_{C_t}$ by $\mathcal{P}_t$.} point from set $\mathcal{P}_{t_n}(z^n)$ according to various strategies (see Sec.~\ref{subsubsec:rmap}), $\mathcal{T}_{t_n}$ is the notation for relaxed projection with relaxation parameter $\lambda_n \in (0,2)$, and $t_{n}$ goes through all constraint indices in one sweep, i.e., $\left\{t_{kM+1},t_{kM+2},\ldots,t_{(k+1)M}\right\} = \left\{1,2,\ldots, M\right\}$. See, e.g,~\cite{Yair1997}.

\subsection{Related Work}\label{subsec:related-work}
\textbf{Floorplanning:} The methods can be broadly divided into discrete meta-heuristic methods, discrete exact methods (e.g., branch-and-bound (B\&B) methods), continuous analytical methods, and learning-based methods.

Discrete methods encompass two primary categories: meta-heuristic methods and exact methods. Meta-heuristic methods are employed to seek sub-optimal solutions, while exact methods strive to achieve optimality. They typically employ a representation, such as sequence pair~\cite{Murata1996}, $B^{*}$-tree~\cite{Chang2000}\cite{Anand2012} or constraint graph~\cite{funke2016}, to encode the relative positions of modules. Subsequently, they search for a solution within the representation space and decode it to obtain the corresponding floorplan. Anand et al.~\cite{Anand2012} utilize a $B^{*}$-tree representation and propose a heuristic based on simulated annealing (SA) to find floorplans with minimized dead space. Funke et al.~\cite{funke2016} employ a constraint graph and a B\&B method to achieve wirelength-optimal floorplans while considering specific sets of blocked regions. The multilevel heuristic reduces solution space by clustering or partitioning, handling large-scale problems. Chen et al.~\cite{chen2008} present IMF, starting with min-cut partitioning and merging via SA with $B^{*}$-tree. Yan et al.~\cite{yan2010defer} propose DeFer based on deferred decision-making, using the generalized slicing tree and an enumerate packing technique to defer the decision on the slicing structure, followed by block swapping and mirroring to enhance wirelength. Ji et al.~\cite{ji2021quasi} introduce QinFer, recursively bipartitioning the circuit into leaf subcircuits for distributed floorplanning, optimizing wirelength, employing a Quasi-Newton method to reduce overlap, and enhancing robustness with a refined distribution algorithm.

Analytical methods model floorplanning as a continuous optimization problem with quadratic~\cite{Zhan2006} or nonlinear~\cite{Lin2014}\cite{Li2022} objective functions. Typically, these methods involve a global floorplanning step to obtain approximate positions for all modules, followed by a legalization step to eliminate overlaps and obtain the precise positions of the modules. Zhan et al.~\cite{Zhan2006} use the bell-shaped function to smooth the density function in global floorplanning and generate a corresponding sequence pair and obtain a floorplan without any overlap between modules for legalization. Lin et al.~\cite{Lin2014} solve floorplanning with voltage-island constraints by a two-stage method. A density-driven nonlinear analytical method is firstly applied in global floorplanning, then a slicing tree is built to record the global distribution result and to determine the final feasible result in the legalization step. Lin et al.~\cite{lin2018fast} propose a multi-level thermal-aware floorplanning method which integrates the analytical thermal model into the non-linear placement model, enabling approximate temperature and minimizing wirelength at the same time. Li et al.~\cite{Li2022} present an analytical method based on Poisson's equation in global floorplanning, and utilize a constraint graph-based legalization approach for floorplanning with soft modules.

Recently learning-based algorithms, especially reinforcement learning and graph neural network, have gained great popularity. GoodFloorplan~\cite{Xu2022} combines graph convolutional network and reinforcement learning to explore the design space effectively. Liu et al~\cite{Liu2022} employ graph attention to learn an optimized mapping between circuit connectivity and physical wirelength, and produce a chip floorplan using efficient model inference.

I/O pin assignment, impacting the total wirelength in the order of 5\%~\cite{bandeira2020fast}, significantly influences the circuit performance. Several works incorporate it in floorplanning or placement. I/O pin assignment is to find locations of I/O pins along the boundaries of the chip. Modern analytical methods iteratively handle cell placement and I/O pin assignment, using I/O pin locations as a starting set of ``anchors'' to placement~\cite{Kim2012}\cite{Viswanathan2005}. Westra and Groeneveld~\cite{westra2005towards} extend the quadratic placement formulation to make concurrent global placement of cells and I/O pins. Banderia et al~\cite{bandeira2020fast} proposed a Hungarian matching-based heuristic, I/O Placer, which adopts a divide-and-conquer strategy for fast and scalable I/O assignment. This work is part of the OpenROAD project~\cite{ajayi2019openroad}.

\textbf{MAP:}
This is a method for handling the FSP. For the convex case, the method is also referred to as ``projection onto convex sets'' (POCS)~\cite{Bauschke2011}, and exhibits convergence~\cite{Yair1997} when the sets have a nonempty intersection, or cyclic convergence~\cite{Gubin1967} with empty intersection of sets under certain configurations.

For the FSP with non-convex sets, recent literature studies the convergence of the so-called Douglas–Rachford (DR) method, see, e.g., \cite{Bauschke2024}. For FSP with two closed sets, Bauschke et al.~\cite{Bauschke2014} prove local convergence of the DR method to a fixed point when each of the sets is a finite union of convex sets and observe the cyclic phenomenon under more general non-convex sets. Artacho et al.~\cite{Artacho2016} establish global convergence and describe the global behavior of the DR method for the problem which involves finding a point in the intersection of a half-space and a non-convex set with well-quasi-ordering property or a property weaker than compactness. However, DR is suited for the two-set case and is not simple to extend to problems under diverse constraints in practice.

There is limited theoretical work available on the general MAP for the non-convex case, particularly when dealing with union convex sets.  
St{\'e}phane et al.~\cite{chretien1996cyclic}\cite{chretien2020} prove the convergence of MAP under some strict conditions.

\textbf{SM:}
A recent tutorial~\cite{Yair2023} on the SM
provides valuable insights into the SM and references numerous recent works and sources on this subject. Blake Schultze et al.~\cite{yair2020} apply the total
variation superiorization to image reconstruction in proton computed
tomography to improve the result. Since its inception in 2007, the SM has evolved and gained ground, as can be seen from the, compiled and continuously updated, bibliography at: http://math.haifa.ac.il/yair/bib-superiorization-censor.html\#top. Inspired by their approach, we propose
here a wirelength-superiorized FSP algorithm for floorplanning.

\section{Formulation of Floorplanning as an FSP}\label{sec:formulation}
In this section, we introduce the FSP-based formulation for the floorplanning problem. Firstly, we present the notations. Consider a floorplanning problem with $N_m$ functional modules from the module set $\mathcal{M}$, a set of pins $\mathcal{S}$, and a set of nets $\mathcal{E}$ of wire connections among the corresponding pins. Among the pins in $\mathcal{S}$, the $N_{io}$ I/O pins (net terminals) are within $\mathcal{S}_{io}$, where $\mathcal{S}_{io} \subset\mathcal{S}$ and the given floorplanning region is a 2D rectangle from $(0, 0)$ to $(W, H)$.
For each module $m_i \in \mathcal{M}$, $w_i$ and $h_i$ denote its width and height, respectively. Besides, for the function modules and I/O pins, their coordinates are recorded in $z = (x, y) \in \mathbb{R}^{2N},  N:=N_m + N_{io}$, 
where
\begin{align}
\begin{split}
    x &=\left(x_{1},x_{2},\ldots,x_{N}\right),\\
    y &=\left(y_{1},y_{2},\ldots,y_{N}\right),
\end{split}
\end{align}
where $x,y\in\mathbb{R}^{N}$ are the $x$-coordinates and $y$-coordinates of
modules from $\mathcal{M}$ and I/O pins from $\mathcal{S}_{io}$ respectively. 
This stacking establishes an injective linear mapping from
the coordinates of modules and I/O pins to $\mathbb{R}^{2N}$. Thus,
we have two representations for the coordinates of modules from
$\mathcal{M}$ and pins from $\mathcal{S}_{io}$. The representation in $\mathbb{R}^{2N}$ is used for establishing the FSP-based formulation and algorithm, while the 2-dimensional representation in the form $(x_{i},y_{i})$ is used for implementation. Here we specify $(x_{i},y_{i})$ of the module $m_i$ as the location of its bottom left corner.

With these notations, we further establish the three common conditions in the floorplanning as an FSP.

\noindent\textbf{Condition 1: Boundary.} Every module is within the given floorplanning region. Therefore, for $1 \leq i \leq N_m$, let
\begin{align}
\begin{split}
    B_{i}^{x}(z) & :=\{z\in\mathbb{R}^{2N}|\,0\leq x_{i}\leq W-w_{i}\},\\
    B_{i}^{y}(z) & :=\{z\in\mathbb{R}^{2N}|\,0\leq y_{i}\leq H-h_{i}\}.\label{eq:B_i^x}
\end{split}
\end{align}
If both the module $m_{i}$ and the module $m_{j}$ (where $1\le i,j\le N_{m}$ and $i\neq j$) fall in the floorplanning region, the following must hold, 
\begin{align}
z\in B_{i,j} & :=B_{i}^{x}\cap B_{i}^{y}\cap B_{j}^{x}\cap B_{j}^{y}.\label{box-constraints}
\end{align}

\noindent\textbf{Condition 2: Non-overlap.} Every pair of modules should have no overlap. Therefore, for $1\le i,j\le N_{m}$ and
$i\neq j$ is equivalent to one of the following four constraints
\begin{align}
\begin{split}
z \in O_{i,j}^x & \Longleftrightarrow\mbox{\ensuremath{m_{i}} is to the left of \ensuremath{m_{j}}},\label{eq:vi:above:vj}\\
z \in O_{j,i}^x & \Longleftrightarrow\mbox{\ensuremath{m_{i}} is to the right of \ensuremath{m_{j}}},\\
z \in O_{i,j}^y & \Longleftrightarrow\mbox{\ensuremath{m_{i}} is below \ensuremath{m_{j}}},\\
z \in O_{j,i}^y & \Longleftrightarrow\mbox{\ensuremath{m_{i}} is above \ensuremath{m_{j}}},
\end{split}
\end{align}
where
\begin{align}
\begin{split}
     & O_{i,j}^{x}(z):=\{z\in\mathbb{R}^{2N}|\,x_{i}+w_{i}\leq x_{j}\},\\
    & O_{i,j}^{y}(z):=\{z\in\mathbb{R}^{2N}|\,y_{i}+h_{i}\leq y_{j}\}.\label{eq:O_ij^x}
\end{split}
\end{align}
Then this condition is equivalent to the following constraint 
\begin{align}
z\in O_{i,j} & :=O_{i,j}^{x}\cup O_{j,i}^{x}\cup O_{i,j}^{y}\cup O_{j,i}^{y},\label{non-overlapping-constraints}
\end{align}
for $1\le i,j\le N_{m}$ and $i\neq j$.

\noindent\textbf{Condition 3: I/O Assignment.} The coordinates of the I/O pins $p\in\mathcal{S}_{io}$ at $(x_{p}^{pin},y_{p}^{pin})$ are to be determined when considering I/O assignment. 
If the I/O pin $p_{i}$ is at the left boundary of floorplanning region,
then 
\begin{align}
D_{p_{i}}^\mathsf{L}(z):=\left\{ z\in\mathbb{R}^{2N}\ |\ x_{p_{i}}^{pin}=0\;\textrm{and}\;0\leq y_{p_{i}}^{pin}\leq H\right\} .
\end{align}
Similar constraints $D_{p_{i}}^\mathsf{R}(z)$, $D_{p_{i}}^\mathsf{B}(z)$, and $D_{p_{i}}^\mathsf{A}(z)$ can be constructed for I/O pins at the right, bottom
and top boundaries of the floorplanning region. 

Based on the above conditions, we further establish the FSP formulations for the floorplanning (with or without I/O assignment). Combining \textbf{Condition 1} and \textbf{Condition 2}, we define $C_{i,j}$ as:
\begin{align}
\begin{split}
    C_{i,j} & := O_{i,j}\cap B_{i,j}\\
 & \ =\left(O_{i,j}^{x}\cap B_{i,j}\right)\cup\left(O_{j,i}^{x}\cap B_{i,j}\right)\\
 & \  \cup \left(O_{i,j}^{y}\cap B_{i,j}\right)\cup\left(O_{j,i}^{y}\cap B_{i,j}\right)\\
 & :=  C_{i,j,\mathsf{L}}\cup C_{i,j,\mathsf{R}}\cup C_{i,j,\mathsf{B}}\cup C_{i,j,\mathsf{A}},\label{eq:union-convex-set}
\end{split}
\end{align}
where $\mathsf{L},\mathsf{R},\mathsf{B}\:\text{and}\:\mathsf{A}$
stand for the relative relationship of the two modules, i.e., $\mathsf{L}$eft, $\mathsf{R}$ight, $\mathsf{B}$elow, and $\mathsf{A}$bove, respectively.
And the FSP model of the \textbf{floorplanning} becomes:
\begin{align}
\mathrm{Find}\text{ }z\in\bigcap_{1\leq i<j\leq N_{m}}C_{i,j}.\label{eq:feas-seek-basic-floorplanning}
\end{align}
The FSP model of the \textbf{floorplanning with I/O assignment} is further combining \textbf{Condition 3} as:
\begin{equation}
\mathrm{Find}\text{ }z\in(\bigcap_{1\leq i<j\leq N_{m}}C_{i,j})\cap(\bigcap_{p\in\mathcal{S}_{io}}D_{p}),\label{eq:feas-seek-floorplanning-IO-assignment}
\end{equation}
where $D_{p}$ omit the mark in the upper right corner for clarity, and it could either be $D_{p_{i}}^\mathsf{L}(z), D_{p_{i}}^\mathsf{R}(z)$, $D_{p_{i}}^\mathsf{B}(z)$, or $D_{p_{i}}^\mathsf{A}(z)$.

For both feasibility-seeking formulations, we commonly add objective functions intended to decrease the total wirelength of nets in terms of the half-perimeter wirelength (HPWL), which is calculated as follows:
\begin{equation}
  \begin{split}
  \mbox{\textrm{HPWL}}& (x,y):=\\
  \sum\limits_{e\in\mathcal{E}}&(\max\limits_{p,q\in e}\vert x_{p}^{pin}-x_{q}^{pin}\vert + \max\limits_{p,q\in e}\vert y_{p}^{pin}-y_{q}^{pin}\vert).
  \end{split}
\end{equation}

It is worth mentioning that each $C_{i,j}$ in \eqref{eq:feas-seek-basic-floorplanning} and \eqref{eq:feas-seek-floorplanning-IO-assignment} is a union of convex sets. The classical MAP-based algorithm, originally validated for convex FSPs, may in these two problems be unstable. Therefore, we analyze the property and challenge of applying MAP to floorplanning in Sec.~\ref{sec:MAP-property-and-challenges} and propose solutions in Sec.~\ref{subsubsec:rmap}. Besides, to find a superior feasible solution in terms of HPWL for the given FSP-based algorithm, we introduce the enhanced method by the SM in Sec.~\ref{subsubsec:algo-per-rmap}.

\section{Features of Applying MAP to Floorplanning}\label{sec:MAP-property-and-challenges}

In the previous section, we introduced the FSP-based formulations for the common conditions in the floorplanning problem. Considering the complex constraints of the union of convex sets, we describe the property and challenge of applying MAP to the floorplanning problem. Firstly, we analyze the local convergence of MAP for FSP with the unions of convex sets. Then, we 
provide an example to demonstrate the oscillations during the iterations which are the main challenge of applying MAP directly to the floorplanning problem.

\subsection{Local Convergence of MAP}
We found experimentally that in the FSP for unions of convex sets, the initial behavior of the sequential MAP is influenced by the choice of the initialization point of the algorithm. Theorem~\ref{thm:attractor} shows that when the initial point is close enough to a common fixed point of all $P_{i,j}$, i.e., close enough to a feasible point of the FSP, then the sequence will converge to a feasible solution, regardless of the scanning order and the relaxation parameters.

\begin{definition}[Fixed Point Set]
The fixed point set of an operator $\mathcal{T}$ is defined as 
\begin{align}
    F(\mathcal{T}) := \left\{z \in \mathbb{R}^{2N}\ |\ z \in \mathcal{T}(z)\right\}.
\end{align}
\end{definition}

This concept is also associated with the theory of discrete dynamical systems, where fixed points are steady states. Any sequence such that $z^{n+1} \in \mathcal{T}(z^{n})$ is called an orbit or a trajectory.

\begin{definition}[Lyapunov stability]
A steady state $z^*$ is stable in the sense of Lyapunov\cite{stable}, if for every $\epsilon > 0$ there exists a $\delta > 0$ such that every trajectory $z^{n+1} \in T(z^{n})$ starting at $z^{0}\in B(z^*,\delta)$ satisfies $z^{n}\in B(z^*,\epsilon)$ for all $n \ge 1$.
\end{definition}

Here and throughout $B(z,r) = \left\{z'\ |\ \Vert z' - z \Vert_2 < r \right\}$ means the open Euclidean ball with center $z$ and radius $r>0$.

\begin{lemma}
Finding feasible solutions of all constraints in~\eqref{eq:fsp} is to find the common fixed points of all relaxed projections in~\eqref{eq:relaxed-projection}\footnote{We omit $\lambda$ in $\mathcal{T}_{t}(z;\lambda)$ for conciseness in this section.}, i.e.,
\begin{align}
    \text{find } z \in \cap_{t=1}^{M} C_t \Longleftrightarrow \text{find } z \in \cap_{t=1}^{M} F(\mathcal{T}_{t}).
\end{align}
\end{lemma}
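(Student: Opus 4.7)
The plan is to prove the stronger set equality $F(\mathcal{T}_t) = C_t$ for each index $t$ separately; the stated lemma then follows immediately by intersecting over $t \in \{1,\ldots,M\}$.

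First I would unfold the definition of the fixed-point set. By the definition of $\mathcal{T}_t$ from~\eqref{eq:relaxed-projection}, the statement $z \in F(\mathcal{T}_t)$ means $z \in z + \lambda_n(\mathcal{P}_t(z) - z)$, i.e., there exists some selection $p \in \mathcal{P}_t(z)$ satisfying $0 = \lambda_n(p - z)$. Since the relaxation parameter lies in $(0,2)$ and is in particular nonzero, this forces $p = z$, which is precisely the condition $z \in \mathcal{P}_t(z)$. The only subtlety here, and essentially the sole non-routine point, is that $\mathcal{P}_t(z)$ is set-valued for non-convex $C_t$, so one must keep track of an \emph{existential} selection rather than a single projected point; nonzero $\lambda_n$ then collapses this selection to $z$ itself.

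Next I would establish $z \in \mathcal{P}_t(z) \Longleftrightarrow z \in C_t$. The forward direction is immediate from the definition of the metric projection, which places $\mathcal{P}_t(z) \subseteq C_t$. For the reverse direction, if $z \in C_t$ then $0 \le \mathrm{d}(z,C_t) \le \|z-z\| = 0$, so $\mathrm{d}(z,C_t) = 0$; hence $z$ belongs to $\{c \in C_t \mid \|z-c\| = \mathrm{d}(z,C_t)\} = \mathcal{P}_t(z)$. Chaining the two equivalences gives $F(\mathcal{T}_t) = C_t$.

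Finally, intersecting over $t$ delivers $\cap_{t=1}^M F(\mathcal{T}_t) = \cap_{t=1}^M C_t$, which is the assertion of the lemma. No obstacle of substance arises: convexity of the $C_t$'s is not used, only the nonemptiness of $C_t$ (so that $\mathcal{P}_t$ is well-defined) and the fact that the relaxation parameter is strictly positive. This is encouraging for the subsequent analysis in Sec.~\ref{sec:MAP-property-and-challenges}, because it means the characterization of feasible points as fixed points carries over verbatim to the unions of convex sets $C_{i,j}$ of~\eqref{eq:union-convex-set}, which is exactly the setting where the local-convergence result of Theorem~\ref{thm:attractor} will be applied.
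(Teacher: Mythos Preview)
Your proof is correct and follows essentially the same route as the paper: both arguments establish $F(\mathcal{T}_t)=C_t$ for each $t$ by showing $z\in F(\mathcal{T}_t)\Leftrightarrow z\in\mathcal{P}_t(z)\Leftrightarrow z\in C_t$, then intersect over $t$. Your version is simply more explicit about the role of $\lambda_n\neq 0$ in the first equivalence and about why $z\in C_t$ forces $\mathrm{d}(z,C_t)=0$ in the second, points the paper leaves implicit.
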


\begin{proof}
For any $z\in F(\mathcal{T}_{t})$, the relaxed projection $z \in \mathcal{T}_{t}(z)$ holds, which implies that the projection $z \in \mathcal{P}_{t}(z)$ also holds according to~\eqref{eq:relaxed-projection}, and thus, $z \in C_t$.

Vice versa, for any $z \in C_t$, it is straightforward to verify that $z \in \mathcal{P}_t(z)$, $z \in \mathcal{T}_{t}(z)$, and thus, $z \in F(\mathcal{T}_{t})$.
\end{proof}

\begin{definition}[Active Indices]\label{def:active-indice}
For constraint $C_t = C_{t,\mathsf{L}} \cup C_{t,\mathsf{R}} \cup C_{t,\mathsf{B}} \cup C_{t,\mathsf{A}}$ in~\eqref{eq:union-convex-set}\footnote{The $t$-th constraint $C_t$ is from the non-overlap condition between some pair of modules $m_i$ and $m_j$, i.e., $C_t \equiv C_{i,j}$. We use $t$ and its corresponding $(i,j)$ interchangeably in this section.}, the active indices of $C_t$ at $z$ are
\begin{align}
K_t(z):=\left\{ k\in \left\{\mathsf{L},\mathsf{R},\mathsf{B},\mathsf{A}\right\} \right. \left.  |\ P_{t,k}(z) \in \mathcal{P}_{t}(z) \right\}.
\end{align}
where $P_{t,k}(z) = \mathop{\arg\min}_{c}\{\Vert c - z \Vert\ |\ c \in C_{t,k}\}$.
\end{definition}

Fig.~\ref{fig:illustration-R1-R2} illustrates the notations in Definition~\ref{def:esc-dist} and~\ref{def:cap-dist}, which will be used in both Lemma~\ref{lemma:active} and Lemma~\ref{lemma:active2}. 

\begin{figure}[t]
\centering
\includegraphics[scale=0.45]{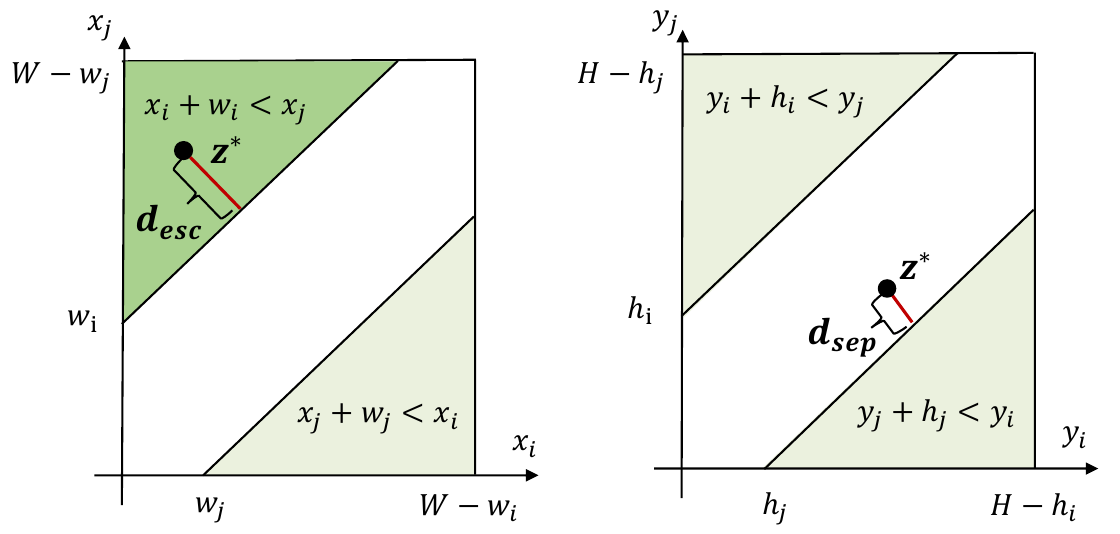}
\caption{Illustration of $d_{esc}(z^*,C_t)$ and $d_{sep}(z^*,C_t)$ when $t=\left(i,j\right)$, where the black point is the point $z^*$. Here, $K_t(z^*)=\left\{\mathsf{L}\right\}$, corresponding to the indice of the dark green region.}%
\label{fig:illustration-R1-R2}%
\end{figure}

\begin{definition}[Escaping Distance from Active Indices]\label{def:esc-dist}
The escaping distance of $z^*$ from all $C_{t,k}$ with an active index $k$ is $d_{esc}(z^*,C_t):=\inf\{\Vert z-z^* \Vert\ |\ \text{$z \notin C_{t,k}$ for $k \in K_{t}(z^*)$}$\}.
\end{definition}
 
\begin{definition}[Separating Distance to Non-active Indices]\label{def:cap-dist}
The separating distance of $z^*$ to all $C_{t,k}$ with a non-active index $k$ is $d_{sep}(z^*,C_t):=\min\{d(z^*,C_{t,k})\ |\ k\not\in K_t(z^*)\}$.
\end{definition}

\begin{lemma}\label{lemma:active}
Given $C_t$ and $z^*\in F(\mathcal{T}_t)$, every $z \in B(z^*,r_t)$ has $K_t(z) = K_t(z^*)$, where \begin{align}
r_t = \min\{d_{sep}(z^*,C_t),\ d_{esc}(z^*,C_t)\}.
\end{align}
\end{lemma}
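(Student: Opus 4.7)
The plan is to prove the set equality $K_t(z)=K_t(z^*)$ by establishing both inclusions, using $d_{esc}$ to handle the active indices and $d_{sep}$ to handle the non-active indices. Writing $K := K_t(z^*)$ and recalling from the preceding lemma that $z^* \in F(\mathcal{T}_t)$ implies $z^* \in C_t$, I would first observe that $d(z^*,C_t)=0$, so $\mathcal{P}_t(z^*)=\{z^*\}$, and therefore $K=\{k\in\{\mathsf{L},\mathsf{R},\mathsf{B},\mathsf{A}\}\mid z^*\in C_{t,k}\}$. Throughout, fix an arbitrary $z \in B(z^*,r_t)$.

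For the inclusion $K \subseteq K_t(z)$, I would argue as follows. For each $k \in K$, the definition of $d_{esc}(z^*,C_t)$ implies that every point strictly within distance $d_{esc}(z^*,C_t)$ from $z^*$ still lies in $C_{t,k}$ (otherwise it would witness a smaller value in the infimum defining $d_{esc}$). Since $\|z-z^*\| < r_t \le d_{esc}(z^*,C_t)$, we get $z \in C_{t,k}$. This forces $P_{t,k}(z)=z$ and in particular $d(z,C_t)=0$, so $\mathcal{P}_t(z)=\{z\}$. Then $P_{t,k}(z)=z\in \mathcal{P}_t(z)$ shows $k \in K_t(z)$.

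For the reverse inclusion $K_t(z) \subseteq K$, take any $k' \notin K$. By Definition~\ref{def:cap-dist}, $d(z^*,C_{t,k'}) \ge d_{sep}(z^*,C_t) \ge r_t > \|z-z^*\|$. The triangle inequality (in the form $d(z,C_{t,k'}) \ge d(z^*,C_{t,k'}) - \|z-z^*\|$) then gives $d(z,C_{t,k'}) > 0$, so $z \notin C_{t,k'}$ and hence $P_{t,k'}(z)\neq z$. Since the previous paragraph already established $\mathcal{P}_t(z)=\{z\}$, we conclude $P_{t,k'}(z)\notin \mathcal{P}_t(z)$, i.e.\ $k' \notin K_t(z)$. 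Combining both inclusions yields $K_t(z)=K$.

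The argument is essentially bookkeeping about the two distances, so there is no deep obstacle; the only subtlety is the degenerate case in which $z^*$ lies on the boundary of some active piece $C_{t,k}$ with $k\in K$, making $d_{esc}(z^*,C_t)=0$ and hence $r_t=0$. In that case $B(z^*,r_t)=\emptyset$ and the conclusion is vacuous, so the statement still stands. I would mention this explicitly to close the proof cleanly, and I would also note that the argument relies only on the elementary properties of Euclidean projection onto closed sets, not on convexity of the individual pieces $C_{t,k}$, which is why the same reasoning will extend naturally to the companion Lemma~\ref{lemma:active2}.
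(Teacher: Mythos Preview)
Your proof is correct and follows essentially the same two-part strategy as the paper's: use $d_{esc}$ to keep every active index active and $d_{sep}$ to keep every non-active index non-active, though you spell out the triangle inequality and the degenerate case $r_t=0$ more carefully than the paper's terse version. One minor caveat: your closing remark that the same reasoning extends to Lemma~\ref{lemma:active2} is misleading, since that lemma derives a \emph{sharper} $r_t$ via a separating-hyperplane argument specific to the half-space structure of the $C_{t,k}$, and it proves only the inclusion $K_t(z)\subseteq K_t(z^*)$ rather than equality.
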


\begin{proof}
For any $z \in B(z^*,r_t)$, every $k \in K_t(z^*)$ is still an active index at $z$, because $\Vert z - z^* \Vert < d_{esc}(z^*,C_t)$ and  $d(z,C_{t,k})=0$; meanwhile, every $k' \notin K_t(z^*)$ is still non-active at $z$, because $\Vert z - z^* \Vert < d_{sep}(z^*,C_t)$ and $d(z,C_{t,k'}) > 0 = d(z,C_{t,k})$. Therefore, we have $K(z) = K_t(z^*)$.
\end{proof}

Lemma~\ref{lemma:active} gives a rough bound for $r_t$ to support the proof. Lemma~\ref{lemma:active2} in Appendix~\ref{sec:more-local-convergence} presents an improved $r_t$.

\begin{definition}[Sequential MAP Trajectory]
A sequential MAP trajectory is $z^0, z^1, z^2, \ldots$, such that $z^{n+1} \in \mathcal{T}_{t_n}(z^n)$ for $n \ge 0$ and $t_n \in \{1,2,\ldots,M\}$. In addition, sequence $\{\mathcal{T}_{t_n}\}$ is obtained from the MAP method, referring to the relaxed projection in~\eqref{eq:relaxed-projection} onto the union of convex sets $C_{t_n}$ in~\eqref{eq:union-convex-set}.
\end{definition}

Finally, the next Theorem~\ref{thm:attractor} reveals a quantitative property on the influence of the location of the initial point on the behavior of the algorithm.

\begin{theorem} \label{thm:attractor}{\bf(Stable Local Attractor)}
Let $z^* \in \cap_{t=1}^MF(\mathcal{T}_{t})$ be a common fixed point whose radius of attraction is $r > 0$. If for arbitrary fixed $\epsilon \in (0,r)$, a sequential MAP trajectory $z^{n+1} \in \mathcal{T}_{t_{n}}(z^{n})$ enters the ball $B(z^*,\epsilon)$, then the succeeding points in the trajectory stay in the ball and converge to some common fixed point $\tilde{z}^* \in \cap_{t=1}^M F(\mathcal{T}_t)$. The radius of attraction $r$ is at least
\begin{align}
r = \min \left\{r_t\ |\ t\in\{1,2,\ldots,M\}\right\},
\end{align}
where $r_t$ is defined in Lemma~\ref{lemma:active} (or Lemma~\ref{lemma:active2}).
\end{theorem}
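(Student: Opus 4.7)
The plan is to use Lemma~\ref{lemma:active} to ``freeze'' the set of active indices throughout the tail of the trajectory. Once the active indices are locked to those of $z^*$, each MAP step reduces to a relaxed convex projection onto a convex piece that already contains $z^*$, so standard Fej\'er-monotonicity machinery takes over and delivers convergence by the classical cyclic-MAP argument.

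First I would record the elementary fact that, since $z^* \in F(\mathcal{T}_t)$ implies $z^* \in C_t$, the active set satisfies $K_t(z^*) = \{k : z^* \in C_{t,k}\}$. Then I would run the induction. Suppose $z^n \in B(z^*,\epsilon)$ with $\epsilon < r \le r_{t_n}$. By Lemma~\ref{lemma:active}, $K_{t_n}(z^n) = K_{t_n}(z^*)$, so any admissible selection gives $\mathcal{P}_{t_n}(z^n) \ni P_{t_n,k^*}(z^n)$ for some $k^* \in K_{t_n}(z^*)$, and the relaxed projection acts as a relaxed \emph{convex} projection onto $C_{t_n,k^*} \ni z^*$. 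The standard variational inequality for convex projection then yields the Fej\'er-type bound
\[
\|z^{n+1} - z^*\|^2 \;\le\; \|z^n - z^*\|^2 - \lambda_n(2-\lambda_n)\,\|P_{t_n,k^*}(z^n) - z^n\|^2.
\]
Because $\lambda_n \in (0,2)$, the right-hand side is $\le \|z^n - z^*\|^2 < \epsilon^2$, so $z^{n+1} \in B(z^*,\epsilon)$ and, since $\epsilon < r \le r_{t_{n+1}}$, the induction propagates to the next sweep index.

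Having established that the whole tail stays in $B(z^*,\epsilon)$ and is Fej\'er monotone with respect to $z^*$, I would sum the displayed inequalities to deduce $\sum_n \lambda_n(2-\lambda_n)\,\|P_{t_n,k^*}(z^n)-z^n\|^2 < \infty$, and hence $\|z^{n+1}-z^n\| \to 0$ under a mild assumption that the $\lambda_n$ are bounded away from $0$ and $2$. The trajectory, being trapped in the compact closure of $B(z^*,\epsilon)$, has at least one accumulation point $\tilde z^*$. Because the sweep visits every constraint $t \in \{1,\ldots,M\}$ within each block of $M$ consecutive iterations and $\|z^{n+1}-z^n\|\to 0$, a standard argument shows $\tilde z^* \in C_{t,k}$ for every $k \in K_t(z^*)$ and every $t$, so $\tilde z^* \in \bigcap_{t=1}^M C_t = \bigcap_{t=1}^M F(\mathcal{T}_t)$. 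Finally, Fej\'er monotonicity of $\{z^n\}$ with respect to $\tilde z^*$ (activated once the trajectory enters a small ball around $\tilde z^*$) upgrades subsequential to full convergence $z^n \to \tilde z^*$.

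The main obstacle is keeping the active-index freeze \emph{uniform} across all constraints, not just the current $t_n$: if we only had $\epsilon < r_{t_n}$, the very next projection onto a different $C_{t_{n+1}}$ could un-freeze its active indices and break the convex reduction. Taking $r = \min_t r_t$ is precisely what makes the locking uniform along the entire sweep, and this is where that definition pays off. A secondary subtlety is that $\mathcal{P}_t$ is set-valued; but the identity $K_{t_n}(z^n) = K_{t_n}(z^*)$ guarantees that every admissible selection $k^*$ lands in a convex piece containing $z^*$, so any choice made by the algorithm is compatible with the Fej\'er estimate above.
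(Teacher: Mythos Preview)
Your reduction via Lemma~\ref{lemma:active} and the Fej\'er estimate is exactly the mechanism the paper uses: once $z^n\in B(z^*,\epsilon)$ with $\epsilon<r=\min_t r_t$, the active indices are frozen to $K_{t_n}(z^*)$, so each step is a relaxed projection onto a convex piece containing $z^*$, giving $\|z^{n+1}-z^*\|\le\|z^n-z^*\|$ and trapping the tail in the ball. The divergence is only in how convergence is finished. The paper does not run the Fej\'er summation argument; instead it observes that each $T_{t_n,k_n}$ is a paracontraction (relaxed projection onto a closed convex set) with $z^*$ as a common fixed point, and invokes Theorem~1 of Elsner et al.~\cite{Elsner1992} as a black box to conclude convergence of the trajectory to some common fixed point. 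Your hands-on route (summability of residuals $\Rightarrow$ $\|z^{n+1}-z^n\|\to 0$ $\Rightarrow$ cluster point is feasible $\Rightarrow$ Fej\'er upgrade) is more self-contained and makes the dependence on the relaxation parameters explicit, at the cost of the extra hypothesis that $\lambda_n(2-\lambda_n)$ is bounded away from zero; the paper's citation hides this issue.

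Two small repairs to your argument. First, the ``standard argument'' only gives $\tilde z^*\in C_{t,k^\dagger}$ for \emph{some} $k^\dagger\in K_t(z^*)$, not for every $k$; that already yields $\tilde z^*\in C_t$, which is all you need. If you want the stronger statement $\tilde z^*\in C_{t,k}$ for all $k\in K_t(z^*)$, note that $\tilde z^*\in\overline{B(z^*,\epsilon)}\subset B(z^*,r_t)$ and $\tilde z^*\in C_t$, so Lemma~\ref{lemma:active} applied at $z^*$ gives $K_t(\tilde z^*)=K_t(z^*)$, whence $\tilde z^*\in C_{t,k}$ for every such $k$. Second, this same observation means Fej\'er monotonicity with respect to $\tilde z^*$ holds for the \emph{entire} tail (since every chosen $k_n\in K_{t_n}(z^*)$ satisfies $\tilde z^*\in C_{t_n,k_n}$), not merely ``once the trajectory enters a small ball around $\tilde z^*$''; your parenthetical is unnecessary and slightly misleading.
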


\begin{proof}
According to Lemma~\ref{lemma:active} (or Lemma~\ref{lemma:active2}), for any $z^{n} \in B(z^*,\epsilon)$ with $\epsilon \in (0,r)$, $K_{t_n}(z^n) \subseteq K_{t_n}(z^*)$. For $k_n \in K_{t_n}(z^n)$, let $T_{t_{n},k_{n}}(z) := z + \lambda(P_{t_{n},k_{n}}(z)-z)$, which is a single-valued function due to the convexity of $C_{t_n,k_n}$. Thus, $z^{n+1} = T_{t_n,k_n}(z^n) \in B(z^*,\epsilon)$, because
\begin{align*}
\Vert z^{n+1} -z^* \Vert = \Vert T_{t_n,k_n}(z^n) - T_{t_n,k_n}(z^*) \Vert \leq \Vert z^{n}-z^*\Vert.
\end{align*}
The last inequality results from the non-expansive property~\cite{Escalante2011} of the relaxed projection onto closed convex sets.

In words, as soon as the MAP trajectory $z^{n+1}\in \mathcal{T}_t(z^n)$ enters the ball $B(z^*, \epsilon)$, the succeeding points stay in the ball. We show next that they converge to some fixed point in $B(z^*, \epsilon)$, relying on the fact that when the neighborhood of the fixed point is small enough, the MAP trajectory of an FSP with union of convex sets could degrade into a MAP trajectory of FSP with convex sets.

$T_{t_{n},k_{n}}$ is a paracontracting operator (see Definition 1 in~\cite{Elsner1992}, details omitted) since it is the relaxed projection onto the closed convex set $C_{t_{n},k_{n}}$.
By Theorem 1 in~\cite{Elsner1992}, the MAP trajectory $z^{n+1} = T_{t_n,k_n}(z^n)$ converges, if and only if the paracontracting operators $\{T_{t_{n},k_{n}}\}$ have a common fixed point. It is easy to verify that $z^* \in \cap_{t=1}^MF(\mathcal{T}_{t})$ is also a common fixed points of $\{T_{t_{n},k_{n}}\}$.

Therefore, $\tilde{z}^* = \lim_{n \to \infty}z^n$ converges and by the continuity of the distance function, $\tilde{z}^* \in B(z^*, \epsilon)$. Moreover, the theorem also says that the limit $\tilde{z}^*$ would be a common fixed point of $\{T_{t_{n},k_{n}}\}$.
\end{proof}

\subsection{Oscillation Phenomena in MAP}\label{sec:limitations-of-map}
Even with relaxation parameter $\lambda$ and scanning order variations, the MAP sequence may converge to an infeasible solution or oscillate among some infeasible solutions under some initialization. We roughly refer to the two cases as oscillation phenomena. Below are some representative examples to show the phenomena.

\begin{eg}{\bf{\texttt{n4}: 4-module case.}}\label{eg:n4}
Place four modules with widths $w=[4,8,6,4]$ and heights $h=[4,4,4,4]$ in a region of size $(W,H)=(8,12)$. Suppose the initial position is as depicted in the left-hand side (LHS) or right-hand side (RHS) cases of Fig.~\ref{fig:4-cell-oscillation}. In that case, it will result in an oscillation between the two cases when applying MAP with relaxation parameter $\lambda=1$ and using the sequential order of scanning modules from the left bottom to the upper right.
\begin{figure}[ht]
\centering
\includegraphics[scale=0.35]{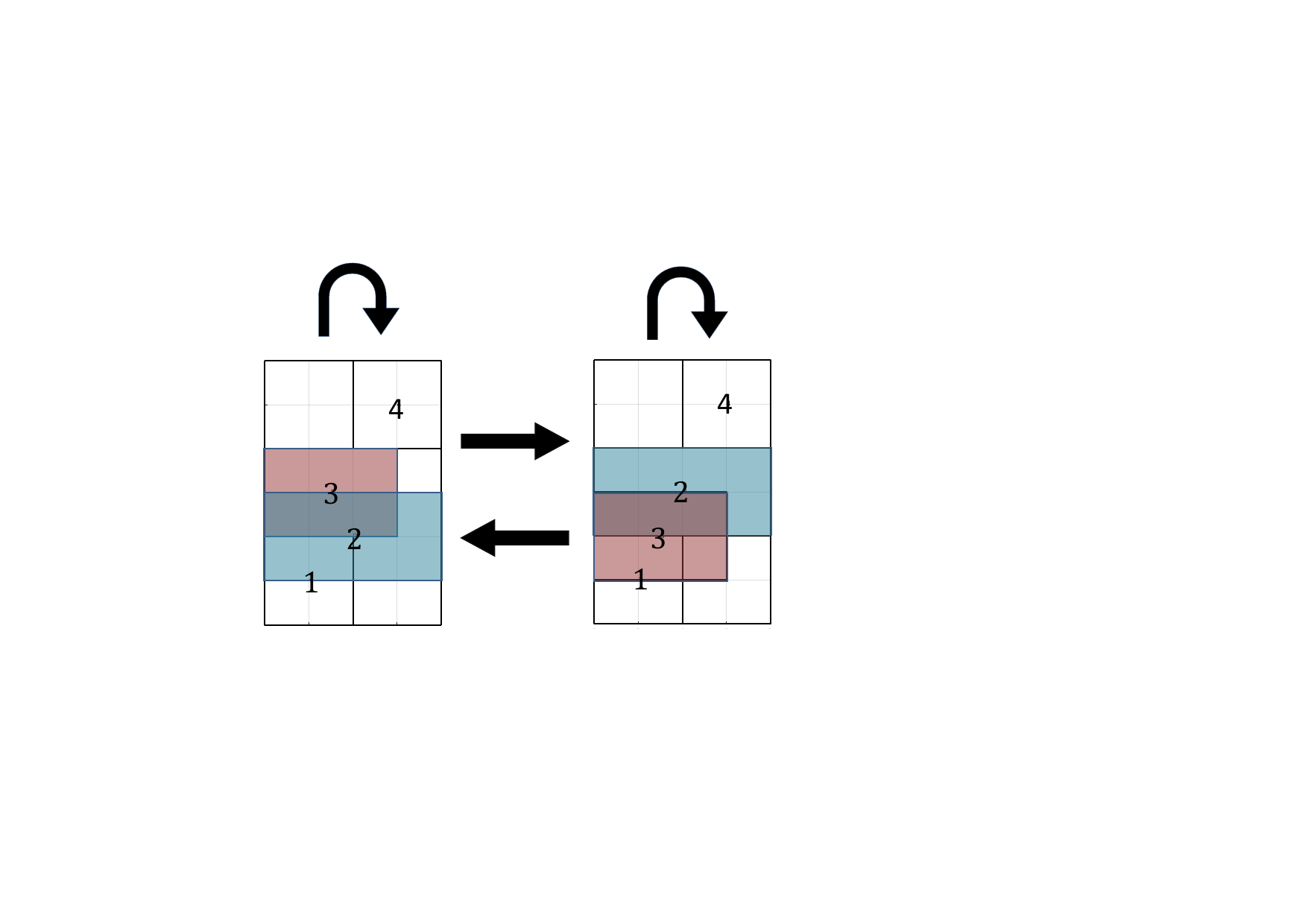}
\caption{Oscillation for \texttt{n4} occurs for bad initialization.}%
\label{fig:4-cell-oscillation}%
\end{figure}

\begin{figure}[ht]
\centering
\includegraphics[scale=0.25]{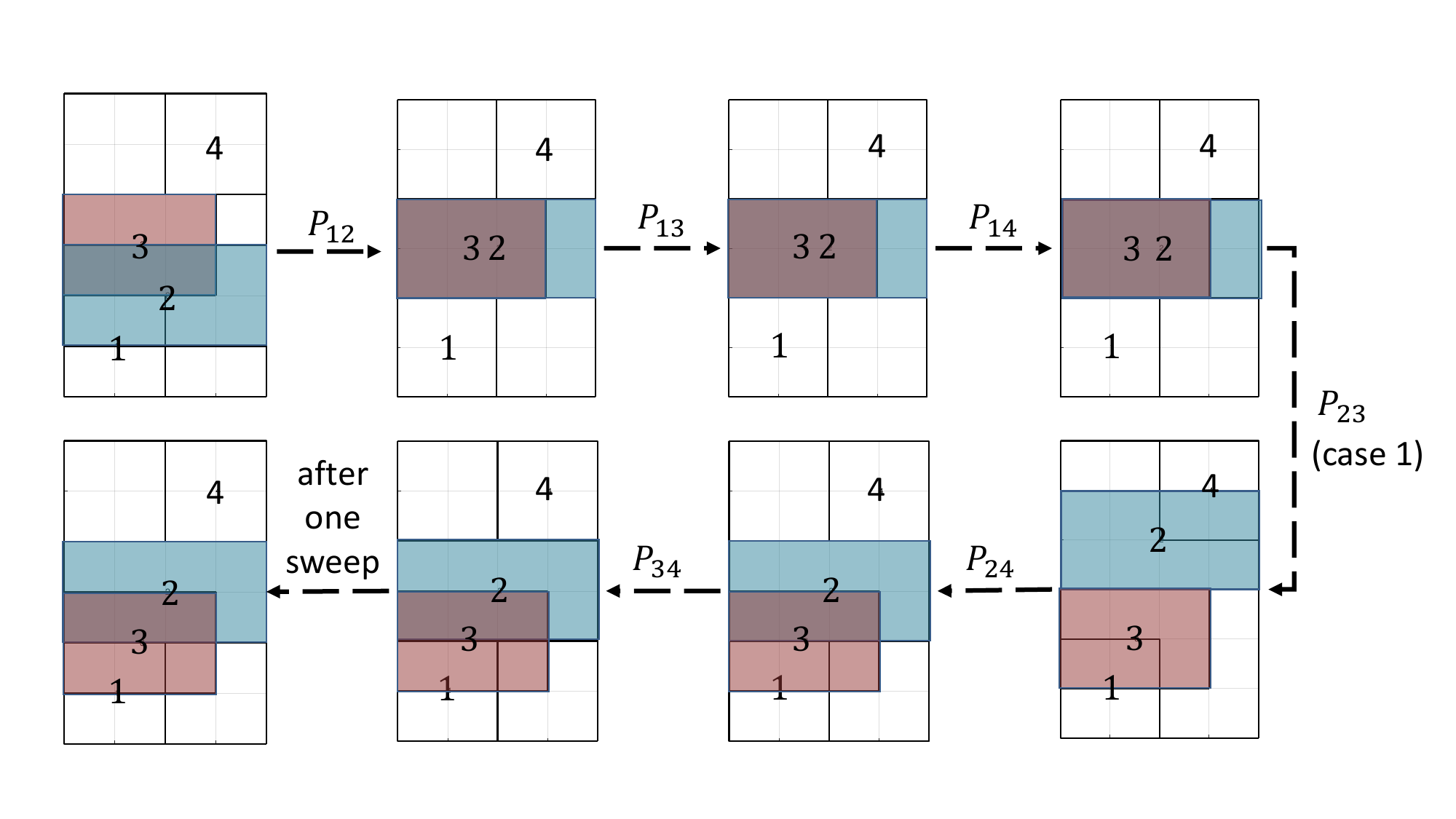} 
\caption{Case 1: begin at the LHS case of Fig.~\ref{fig:4-cell-oscillation}, finish at the RHS case of Fig.~\ref{fig:4-cell-oscillation} after applying a sweep of MAP.}
\label{fig:dead-lock-1}%
\end{figure}

\begin{figure}[ht]
\centering
\includegraphics[scale=0.25]{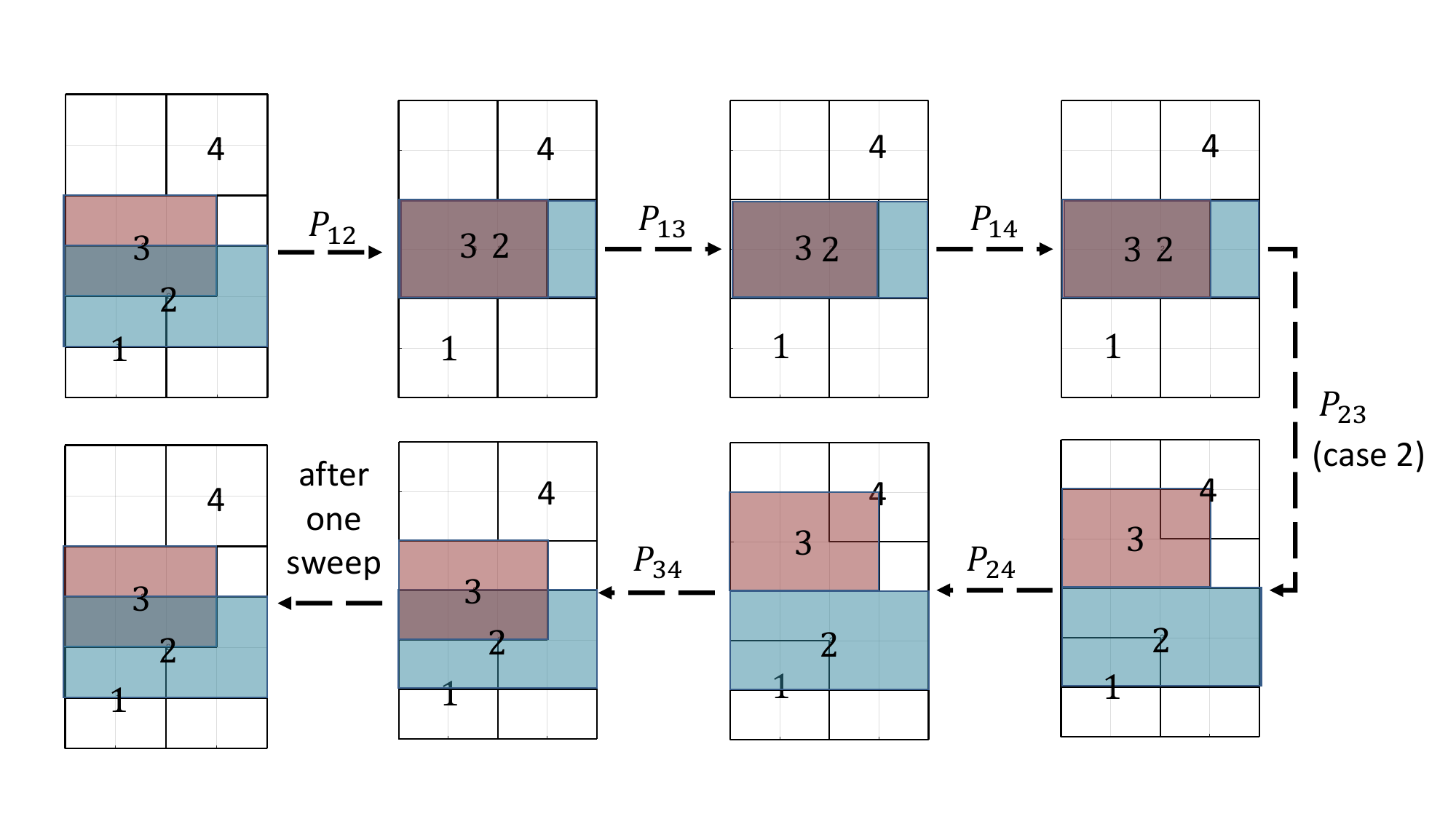}
\caption{Case 2: begin and finish at the LHS case of Fig.~\ref{fig:4-cell-oscillation} after applying a sweep of MAP.}%
\label{fig:dead-lock-2}%
\end{figure}

In one iteration, MAP checks overlaps of a pair of modules and tackles the constraints between them. In one sweep, all constraints would be scanned. When the initial position is the LHS case of Fig.~\ref{fig:4-cell-oscillation}, it may go to the RHS case of Fig.~\ref{fig:4-cell-oscillation} or stay unchanged after a sweep, as shown in Fig.~\ref{fig:dead-lock-1} and~\ref{fig:dead-lock-2}. A similar situation will happen if the initial position is the RHS case of Fig.~\ref{fig:4-cell-oscillation}. As a result, an oscillation occurs.

The oscillations arise regardless of the value of $\lambda$ and regardless of the scanning order of the constraints sets. This is because MAP, employing orthogonal projections, does not alter the relative positions of modules 1-2-4 and 1-3-4.
Consequently, module 1 and module 4 always remain separated, making it impossible to find any feasible solutions for the whole problem.
\end{eg}

In summary, the reasons that we observed for oscillations in applying MAP to FSP with the union of convex sets are:
\begin{enumerate}
    \item The closest point strategy employed by MAP selects the closest convex sets to project onto, which does not guarantee the correct choice of convex subsets. Incorrect choice can lead to a lack of feasible solutions if the intersection of the chosen sets is empty.
    \item The projections onto non-convex sets may not be unique, which results in the diversity of positions of the next iteration.
\end{enumerate}

MAP could encounter divergence and we could see an oscillation phenomena, which is a problem hard to solve by directly adjusting the parameters of MAP. In the next section, a new strategy instead of the “closest point strategy” is designed to eliminate the oscillation problem.

\section{A Perturbed Resettable Method of Alternating Projection (Per-RMAP)}\label{sec:PerRMAP}
Based on the analysis of the convergence behavior of MAP in the previous section, here we present the algorithmic flow based on MAP. As shown in Fig.~\ref{fig:algorithm-framwork},
it consists of three phases: initialization (Sec.~\ref{subsec:algo-init}),
global floorplanning (i.e., Per-RMAP in Sec.~\ref{subsec:algo-global-fplan}),
and post-processing (Sec.~\ref{subsec:algo-post-proc}).

In the initialization phase, module positions with minimized HPWL are achieved by solving quadratic programming, followed by a slight modification of certain modules to the boundary. 
In the global floorplanning phase, an SM called Per-RMAP is designed. It generalizes the MAP with a resetting strategy called RMAP to improve the convergence behavior of MAP, and then perturbations are applied to improve the original FSP-based algorithm RMAP.
In the post-processing phase, considering the diminishing effect of perturbations, Per-RMAP is rerun to further improve the result.
\begin{figure}[t]
\centering 
\includegraphics[scale = 0.45]{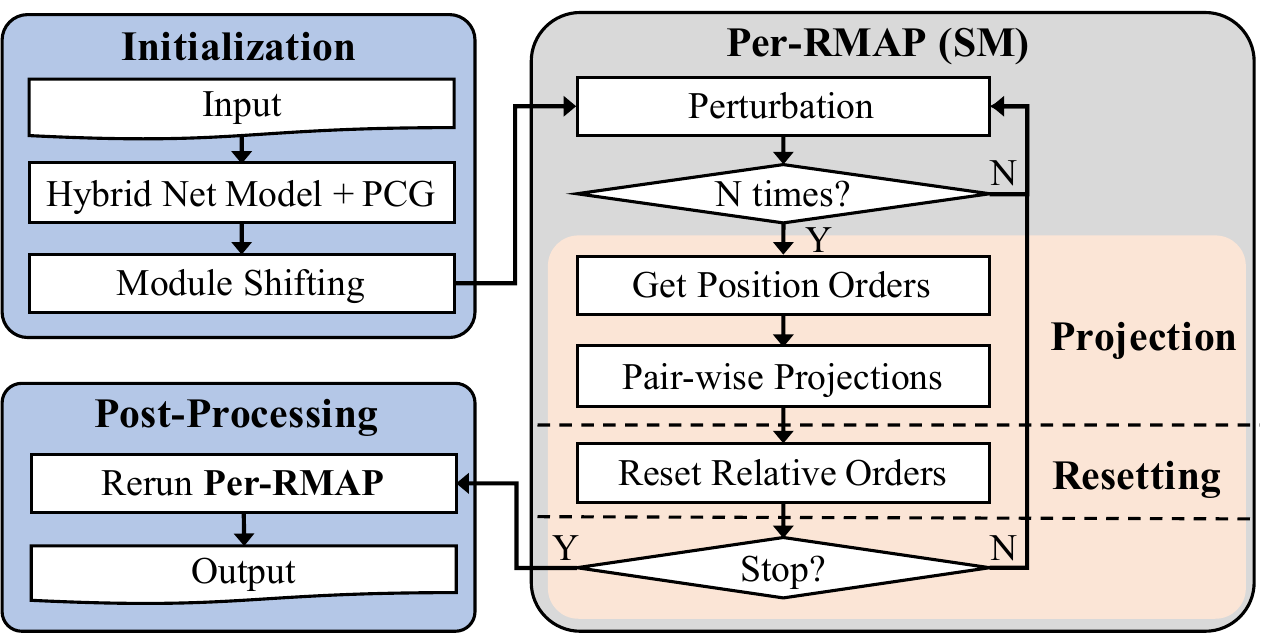}
\caption{Algorithmic flow}
\label{fig:algorithm-framwork}
\end{figure}

\subsection{Initialization}\label{subsec:algo-init}
The initialization of module positions involves two steps: a wirelength minimization step and a shifting step.

In the first step, modules are assigned positions that minimize the HPWL, disregarding module overlaps. We assume initial positions of I/O pins are given\footnote{In our experiment, the initial positions of I/O pins are derived from the benchmarks of the hard macro cases, stored in .pl files. In implementation, the initial positions of I/Os could be generated by chip-package co-design methods, see e.g.~\cite{he2011system}.} and they are fixed during this step. The updated positions of modules are achieved by solving a quadratic programming (QP) problem using the preconditioned conjugate gradients (PCG) method. This approach is similar to other force-directed placers like SimPL~\cite{Kim2012} and FastPlace~\cite{Viswanathan2005}. For net decomposition, the hybrid net model~\cite{Viswanathan2005} is employed. This model combines a classical clique model and a star model, striking a balance between speed and accuracy while capturing the relative positions effectively. However, when there are few connections between modules and I/O pins located on the boundary of the floorplanning region, the modules tend to cluster together.

In the second step, module shifting is utilized to refine the initialization.
The module shifting is only applied to some of the cases with vastly different-sized modules. After the QP procedure, modules tend to cluster at the center of the floorplanning region. If we apply projections to two modules with vastly different sizes, modules of relatively smaller sizes tend to get stuck at the center which may heavily degrade the wirelength. To solve this problem, we move them away from the center. So in implementation, we shift the smallest modules toward the boundary, whose width or height is less than 10\% of the largest one.

\subsection{Global Floorplanning: Per-RMAP}\label{subsec:algo-global-fplan} 
We design our SM algorithm called Per-RMAP that relies on projections. To address challenges in this approach, we design both a resetting strategy and perturbations to achieve an effective and efficient global floorplanning solution.

The resetting strategy mitigates oscillation issues when applying the MAP algorithm to floorplanning with a feasibility-seeking formulation. It identifies pairs of modules that could potentially lead to conflicts.

Additionally, perturbations help find a feasible solution with reduced wirelength. It iteratively improves the objective function while maintaining feasibility.

\subsubsection{Resettable Method of Alternating Projection (RMAP)}\label{subsubsec:rmap}
To address challenges in solving floorplanning problems with non-convex constraint sets using the MAP algorithm in~\eqref{eq:relaxed-projection}, we propose a generalized version and a resetting strategy to overcome issues like converging to an infeasible solution or oscillating among infeasible solutions, which we abbreviate as oscillation phenomena.

To overcome this, we introduce a customizable choice strategy by incorporating a ``preference ratio'' among the subsets within the union. This allows for more control over the selection process. As shown in Algorithm~\ref{alg:RMAP}, the generalized MAP follows a specific order of module scanning and applies projections by taking the weighted average of projections onto four convex sets defined in~\eqref{eq:union-convex-set}, where the preference ratio plays a crucial role in selecting among these convex sets and is amplified using an exponential function.

\begin{algorithm}[t] \small
\caption{
RMAP algorithm
}
\label{alg:RMAP} 
\begin{algorithmic}[1]
\Require
  \Statex The initial positions: $z=(x,y)\in\mathbb{R}^{2N}$
  \Statex  The processing order of constraints: $order$
\Ensure
  \Statex The updated positions: $z=(x,y)$
  \Statex 
\For{$C_{i,j}$ in $order$}
  \State $(\eta_{L},\eta_{R},\eta_{B},\eta_{A}) = preference\_ratio(i,j)$
  \State $w_{t} = {\exp({\eta_{t}}/{\epsilon})}/ {\sum_{k}{\exp({\eta_{k}}/{\epsilon})}}$, for $t\in\{\mathsf{L},\mathsf{R},\mathsf{B},\mathsf{A}\}$
  \State $z=\sum_{t\in\left\{\mathsf{L},\mathsf{R},\mathsf{B},\mathsf{A}\right\}}w_{t} \cdot P_{C_{i,j,t}}(z)$
\EndFor 
\State \Return $z$
\end{algorithmic}
\end{algorithm}

In the MAP algorithm, the preference ratio is determined using the closest point strategy, which is calculated by the expression $\eta_{t}=-\|z-P_{i,j,t}(z)\|$.

Building upon this, a resetting strategy is devised. The resetting strategy could improve the convergence behavior of MAP. It records the subsets selection for each union convex set during previous iterations and based on which the preference ratio is calculated, as depicted in \eqref{eq:resetting-strategy}:
\begin{equation}
    \eta_{k} = 
       \begin{cases}
        -\infty, & \text{if } c_{i,j,k} > S \text{ (and reset } c_{i,j,k} = 0),\\
        -\|z-P_{i,j,k}(z)\|, & \text{o.w. (and } c_{i,j,k} = c_{i,j,k}+1).
        \label{eq:resetting-strategy}
        \end{cases}
\end{equation}
where $S$ is a predefined positive integer. The $c_{i,j,k}$ is the count
of each convex set $C_{i,j,k},\textrm{ for }k\in\{\mathsf{L},\mathsf{R},\mathsf{B},\mathsf{A}\}$ that has been projected onto since the last reset.

That is, when a pair of modules repeatedly projects into specific subsets without successfully removing overlap for more than $S$ times, a ``reset'' action is triggered. This reset action assigns the lowest preference ratio to that direction in the current iteration, aiming to break free from oscillations phenomena.

\subsubsection{Perturbations in Superiorization Method}\label{subsubsec:superiorization}  
The SM lies conceptually between feasibility-seeking and
constrained optimization. While seeking compatibility with constraints,
SM iteratively reduces the value of an objective function without necessarily reaching a minimum. It employs proactive measures to perturb the iterates, guiding them towards a feasible point and simultaneously reducing the objective function.

Drawing inspiration from a modern version of superiorization proposed by Censor et al.~\cite{yair2020}, we present novel modifications of the SM to reduce the HPWL, where the perturbations at $k$-th iteration are shown in Algorithm~\ref{alg:superiorization}.

\begin{algorithm}[t] \small
\caption{
Perturbations in the SM
}
\label{alg:superiorization} 
\begin{algorithmic}[1] \Require \Statex
The intermediate positions: $z=(x,y)\in\mathbb{R}^{2N}$ 
 \Statex Number of perturbations in one iteration: $\mbox{\textit{Num}}$ 
 \Statex Current iteration number: $k$ 
 \Statex Perturbation decay index: $\ell_{k-1}$ 
 \Statex Minimum perturbation length: $\lambda_{min}$ 
 \Statex Initial perturbation length: $\lambda_{init}$
 \Statex Perturbation decay factor: $\Lambda\in(0,1)$ 
 \Ensure \Statex The updated
positions: $z$ \Statex The new perturbation decay index: $\ell_{k}$
\Statex
\For{$n=1:\mbox{\textit{Num}}$}\label{eq:pert-loop-1} 
    \If{$k<l_{k-1}$}\label{eq:pert-decay-index-begin}
    \State $\ell_{k} = \text{a random integer in $\left[k,\ell_{k-1}\right]$}$
    \Else \State $\ell_{k}=k$ 
    \EndIf\label{eq:pert-decay-index-end}
 \State $v^{k,n}=\nabla \mbox{\textrm{HPWL}}(x,y)$ 
 \For{$cnt=1:10$} 
 \State $\lambda_{pert}^{k,n}=max(\lambda_{min},\lambda_{init}\cdot\Lambda^{l_{k}})$\label{eq:lambda-pert} 
\State $(x',y')=z'=z-\lambda_{pert}^{k,n}\cdot{v^{k,n}}/{\|v^{k,n}\|}$\label{eq:pos-pert}
\If{$\mbox{\textrm{HPWL}}(x',y')<\mbox{\textrm{HPWL}}(x,y)$} 
\State $z=z'$ 
\State \textbf{break} // and continue at line \ref{eq:pert-decay-index-begin}
\EndIf
\State $\ell_{k}=\ell_{k}+1$ 
\EndFor \EndFor \State \Return
$(z,\ell_{k})$ 
 \label{eq:pert-loop-2} 
\end{algorithmic} 
\end{algorithm}

At iteration $k$, the SM algorithm applies $\mbox{\textit{Num}}$ perturbations to adjust the positions of modules and I/O pins in accordance with the negative gradient of the HPWL function. If a perturbation (line \ref{eq:lambda-pert}-\ref{eq:pos-pert}) reduces the total wirelength, it is accepted. However, if the wirelength does not decrease, the algorithm iteratively decreases the step size up to a maximum of 10 times until a shorter wirelength is achieved.

The step sizes $\lambda_{pert}^{k,n}$ (line
\ref{eq:lambda-pert}) decreases with iterations and has the following properties:
\begin{enumerate}
\item They should be summable,
i.e., $\sum_{k=0}^{\infty}\sum_{n=0}^{\footnotesize{\mbox{\textit{Num}}}-1}\lambda_{pert}^{k,n}<+\infty$. Here we generated via a subsequence of
 $\{\Lambda^{\ell}\}_{\ell=0}^{\infty}$ with $\ell$ powers of the user-chosen kernel $0<\Lambda<1$. 
\item A lower bound $\lambda_{min}$ is given to ensure the performance
of the perturbation. In our setting, $\lambda_{min}=0.1$. 
\item The perturbation decay index $\ell_k$ controls the step size of the perturbations, as shown in line~\ref{eq:lambda-pert} of Algorithm~\ref{alg:superiorization}. Since $\ell$ increases after each inner loop, the perturbation magnitude could converge to zero quickly. To preserve the impact of perturbations in later iterations, the exponent $\ell_k$ is adjusted to a random integer between current iteration number $k$ and previous value $\ell_{k-1}$, as shown in lines~\ref{eq:pert-decay-index-begin}-\ref{eq:pert-decay-index-end} of Algorithm~\ref{alg:superiorization}. This modification was also used in linear superiorization (LinSup)\cite{censor2017}  and total variation superiorization applied to reconstruction in proton computed tomography\cite{yair2020}.
\end{enumerate}

\subsubsection{Perturbed Resettable Method of Alternating Projection (Per-RMAP)}\label{subsubsec:algo-per-rmap} 
Combining RMAP of Algorithm~\ref{alg:RMAP} as the feasibility-seeking component and the perturbations proposed in Algorithm~\ref{alg:superiorization}, we obtain our SM algorithm, called Perturbed Resettable Method of Alternating Projection (Per-RMAP), as presented in Algorithm~\ref{alg:PerRMAP}.
It offers a comprehensive methodology for reaching improved positions of modules and I/O pins in a floorplan. 
\begin{algorithm}[t] \small
\caption{
Per-RMAP algorithm
}
\label{alg:PerRMAP} \begin{algorithmic}[1] \Require 
\Statex The initial positions: $z=(x,y)\in\mathbb{R}^{2N}$ 
\Statex Number of perturbations in one iteration: $\mbox{\textit{Num}}$ 
\Statex Minimum perturbation length: $\lambda_{min}$ 
\Statex Initial
perturbation length: $\lambda_{init}$ 
\Statex Perturbation decay factor: $\Lambda\in(0,1)$ 
\Statex Initial projection length: $\gamma_{init} \in (0,1)$
\Statex Projection progress factor: $\Gamma>1$
\Ensure \Statex The updated positions $z=(x,y)$ \Statex
\State $\ell_{-1}=0$
\For{$k=0:\infty$}
\State $(z,\ell_{k})=\mbox{\textrm{Perturbation}}(z,\mbox{\textit{Num}},k,\ell_{k-1},\lambda_{min},\lambda_{init},\Lambda)$
\State $order=$ {[}generated by position order{]}
\State $\gamma_{proj}=\min(1,\gamma_{init}\cdot\Gamma^{k})$\label{eq:lambda-proj}
 \State $z=z+\gamma_{proj}\cdot(\mbox{\textrm{RMAP}}(z,order)-z)$ 
 \State $isStop = \mbox{\textrm{RelativeOverlappingAreaCheck}}(z)$
 \If{$isStop == true$}
 \State \textbf{break}
 \EndIf
\EndFor
\State \Return $z$
\end{algorithmic} 
\end{algorithm}

Per-RMAP focuses on feasibility-seeking, and improves wirelength by perturbations interlaced into the feasibility-seeking procedure. This method is composed of lightweight projections. Each individual set allows for easy and fast projections onto it. As a result, compared to other conventional methods which are designed to seek optimality and are usually composed of intermediate steps of objectives processing, Per-RMAP significantly enhances the algorithm efficiency. Persistent perturbations make the solution quality competitive with other methods and with the time cost savings.

The termination condition of Per-RMAP is based on the behavior of the ``relative overlapping area'' (ROA). If the ROA is below a threshold, indicating a feasible solution, the algorithm terminates and returns the current positions. If the ROA remains constant for a period of consecutive iterations or oscillates among certain values, which indicts of oscillations, the algorithm terminates and returns error.

\subsection{Post Processing}\label{subsec:algo-post-proc}
After global floorplanning, we obtain a feasible result. However, the application of the superiorization technique, which gradually brings modules closer together, has a diminishing effect on position changes as iterations progress. Consequently, there may still be small gaps remaining between the modules.

Hence Per-RMAP is rerun to improve the result. At this phase, the perturbation decay index is reset to $k\times\theta$, where $k$ is the total iteration number and
$\theta\in(0,1)$. This reset allows for a fresh start and helps to close any remaining gaps between the modules.

For the final step of floorplanning with the I/O assignment, we legalize the I/O pins to predefined positions that are derived from a pin pitch. The order of I/O pins is kept the same as that in global floorplanning.

\section{Experimental Evaluations}\label{sec:experiment}
In this section, we introduce the comprehensive evaluations for Per-RMAP and the other key prior studies. 

\subsection{Setup}
\textbf{Benchmarks:} The benchmarks used for evaluations include: (a) Microelectronics Center of North Carolina (MCNC) \cite{mcnc} benchmark; (b) Gigascale Systems Research Center (GSRC)\cite{gsrc} benchmark; and (c) Synthetic benchmark with four cases. The MCNC and GSRC are widely used benchmarks for floorplanning, while the synthetic one is used to illustrate the impact of the resetting strategy. The details of the benchmarks are listed in Table~\ref{tab:bench}, including the number of modules, I/O pins, pins, nets, and the size of the floorplanning region (die size).

\textbf{Baselines:} We consider the following representative prior studies as the baselines according to the classifications mentioned in Sec.~\ref{subsec:related-work}: (a) Branch-and-bound method (B\&B)~\cite{funke2016}, a discrete exact method; (b) SA~\cite{Liu2008}, Fast-SA~\cite{chen2006}, and Parquet-4~\cite{Adya2003}, three discrete meta-heuristic methods with the simulated annealing algorithm; (c) FD~\cite{Samaranayake2009}, an analytical method with force-directed techniques; (d) PeF~\cite{Li2022}, a non-linear analytical method. These baselines combined with the previous version of Per-RMAP (marked as Per-RMAP~\cite{yu2023}) are used for the evaluations of HPWL and runtime to demonstrate the superiority and efficiency of Per-RMAP. Besides, we also conduct ablation studies to evaluate the effectiveness of the resetting strategy and perturbations in Per-RMAP. 

\textbf{Implementations:} The implementations of Per-RMAP in this article are developed in C++ and all of the experiments are evaluated on a server equipped with 2-way Intel Xeon Gold 6248R@3.0GHz CPUs and 768GB DDR4-2666MHz memory. Compared to the previous version of Per-RMAP~\cite{yu2023} (implemented in MATLAB), we refactor the code in C++ and use efficient libraries to speed it up. Besides, the algorithm is modified to enable a simultaneous reset.

\begin{table}[t] \scriptsize
\renewcommand{\arraystretch}{1.08}
\setlength\tabcolsep{3pt}
\centering
\caption{Characteristics of the benchmarks for evaluation.}
\label{tab:bench} %
\begin{tabular}{ccccccc}

\hline\hline
\textbf{Benchmark} & \textbf{Instance}  & \textbf{\#Modules}  & \textbf{\#I/O Pins} & \textbf{\#Pins}  & \textbf{\#Nets} & \textbf{Die Size ($\mu m^{2}$)}\\
\hline
\multirow{5}{*}{MCNC} & \texttt{apte}  & 9  & 73  & 214  & 97 & 10500$\times$10500\\
& \texttt{xerox}  & 10  & 2  & 696  & 203 & 5831$\times$6412\\
& \texttt{hp}  & 11  & 45  & 264  & 83 & 4928$\times$4200\\
& \texttt{ami33}  & 33  & 42  & 480  & 123 & 2058$\times$1463\\
& \texttt{ami49}  & 49  & 22  & 931  & 408 & 7672$\times$7840\\
\hline
\multirow{3}{*}{GSRC}& \texttt{n100}  & 100  & 334  & 1873  & 885 & 800$\times$800\\
& \texttt{n200}  & 200  & 564  & 3599  & 1585 & 800$\times$800\\
& \texttt{n300}  & 300  & 569  & 4358  & 1893 & 800$\times$800\\
\hline
\multirow{4}{*}{Synthetic}&\texttt{n3}  & 3  & -  & -  & - & 11$\times$11\\
&\texttt{n3v}  & 3  & -  & -  & - & 5$\times$11\\
&\texttt{n4}  & 4  & -  & -  & - & 8$\times$12\\
&\texttt{n5}  & 5  & -  & -  & - & 3$\times$3\\
\hline\hline
\end{tabular}
\end{table}

\subsection{Floorplanning Results on the MCNC Benchmark}\label{subsec:result-MCNC}
\subsubsection{Results of Original Floorplanning}
For the evaluations of the fixed-outline floorplanning,
we compared Per-RMAP against SA~\cite{Liu2008}, FD~\cite{Samaranayake2009}, B\&B~\cite{funke2016}, PeF~\cite{Li2022}, and RMAP (Per-RMAP w/o introducing SM perturbations) on the MCNC benchmark.
Per-RMAP is the most efficient method compared with all other methods while achieving sub-optimal results. The HPWL of Per-RMAP achives an average 166$\times$ speed-up with only a 5$\%$ increase compared to B\&B~\cite{funke2016}. In comparison with the state-of-the-art analytical method PeF, Per-RMAP outperforms in both HPWL and time. The floorplanning results of \texttt{ami49} are depicted in Fig.~\ref{fig:ami49-plot}.

\begin{table*}[ht]\scriptsize
\renewcommand{\arraystretch}{1.08}
\caption{Experimental results for floorplanning on the MCNC benchmark.}
\label{tab:floorplan-result-wo-io-1}
\centering \setlength{\tabcolsep}{5pt}
\begin{tabular}{cccccccc} 
\hline\hline
\multirow{2}{*}{\textbf{Instance}} & \multicolumn{7}{c}{\textbf{HPWL}}\\ 
\cline{2-8} 
& SA~\cite{Liu2008}    & FD~\cite{Samaranayake2009}  & B\&B~\cite{funke2016} & PeF~\cite{Li2022} & RMAP & Per-RMAP~\cite{yu2023} & Per-RMAP \\  

\hline
\texttt{apte} & 614602 & 545136 & 513061 & 529162 & 611049 & 528618 & 522331 \\
\texttt{xerox} & 404278 & 755410 & 370993 & 422623 & 932498 & 382596 & 398027 \\
\texttt{hp}& 253366 & 155463 & 153328 & 157204 & 191484 & 159979 & 152926 \\
\texttt{ami33} & 96205  & 63125  & 58627  & 67325 & 122371 & 61444  & 63079 \\
\texttt{ami49} & 1070010 & 871128 & 640509 & 789270 & 1692930 & 637098 & 689296 \\ 
\hline
\textbf{Norm. Ratio}   & \textbf{1.00$\times$} & \textbf{0.97$\times$} & \textbf{0.71$\times$} & \textbf{0.79$\times$} & \textbf{1.38$\times$} & \textbf{0.73$\times$} & \textbf{0.75$\times$} \\
\hline\hline
\multirow{2}{*}{\textbf{Instance}} & \multicolumn{7}{c}{\textbf{时间（秒）}} \\
\cline{2-8}
& SA~\cite{Liu2008} & FD~\cite{Samaranayake2009} & B\&B~\cite{funke2016} &  PeF~\cite{Li2022} & RMAP & Per-RMAP~\cite{yu2023} & Per-RMAP\\
\hline
\texttt{apte} & 1.21 & 0.38& 13.00 & 0.25 & \textless0.01 & 8.84  & 0.03\\
\texttt{xerox} & 0.95 & 0.09 & 48.00 & 0.60 & \textless0.01 & 172.09& 0.43\\
\texttt{hp} & 1.36 & 0.36 & 102.00 & 0.38 & \textless0.01 & 5.06  & 0.11\\
\texttt{ami33} & 1.64& 0.75 & 13.00 & 0.48 & \textless0.01 & 23.83 & 0.14\\
\texttt{ami49} & 5.60& 4.11 & 73.00 & 2.21 & \textless0.01 & 261.76& 1.70\\
\hline
\textbf{Norm. Ratio} & \textbf{1.00$\times$} & \textbf{0.37$\times$} & \textbf{31.45$\times$} & \textbf{0.36$\times$} & - & \textbf{50.69$\times$} & \textbf{0.19$\times$}\\
\hline\hline
\end{tabular}
\end{table*}

\subsubsection{Results of Floorplanning with I/O assignment}
Besides, we further consider the floorplanning problem with the I/O assignment, showing the flexibility of the framework considering extra constraints.
As shown in Table~\ref{tab:floorplan-result-with-io}, 
Per-RMAP with I/O assignment achieves an average 6$\%$ improvement in HPWL and achieves a speed-up of $\sim$100$\times$ compared to B\&B~\cite{funke2016}.
Three of five instances achieve better HPWL. For the \texttt{apte} instance, we achieve at most a 22\% improvement in HPWL. For \texttt{xerox} instance, the HPWL of Per-RMAP is worse than that of B\&B~\cite{funke2016} after considering the I/O assignment. The reason is that there are only two I/O pins on the center of the top and bottom boundaries of the floorplanning region with no room for improvement.

\begin{table}[ht] \scriptsize
\renewcommand{\arraystretch}{1.08}
\caption{Experimental results for floorplanning on the MCNC benchmark \textbf{with I/O assignment}.}
\label{tab:floorplan-result-with-io} \centering
\begingroup
    \setlength{\tabcolsep}{0.3\tabcolsep}
\begin{tabular}{cccccccc}
\hline\hline
\multirow{3}{*}{\textbf{Instance}} & \multicolumn{3}{c}{\textbf{Norm. HPWL}} &  & \multicolumn{3}{c}{\textbf{Time (sec)}} \\
\cline{2-4} \cline{6-8}
& \multicolumn{1}{c}{\multirow{2}{*}{\begin{tabular}[c]{@{}c@{}}B\&B~\cite{funke2016}\end{tabular}}} & \multicolumn{1}{c}{\multirow{2}{*}{\begin{tabular}[c]{@{}c@{}}Per-RMAP \\ w/o I/O\end{tabular}}} & \multicolumn{1}{c}{\multirow{2}{*}{\begin{tabular}[c]{@{}c@{}}Per-RMAP \\ w/ I/O\end{tabular}}} & & \multicolumn{1}{c}{\multirow{2}{*}{\begin{tabular}[c]{@{}c@{}}B\&B~\cite{funke2016}\end{tabular}}}  & \multicolumn{1}{c}{\multirow{2}{*}{\begin{tabular}[c]{@{}c@{}}Per-RMAP \\ w/o I/O\end{tabular}}} & \multicolumn{1}{c}{\multirow{2}{*}{\begin{tabular}[c]{@{}c@{}}Per-RMAP \\ w/ I/O\end{tabular}}} \\
\\ \hline

\texttt{apte} & 1.00$\times$ & 1.02$\times$ & 0.78$\times$ & & 13.00  & 0.03 & 0.08\\
\texttt{xerox} & 1.00$\times$ & 1.07$\times$ & 1.01$\times$ & & 48.00 & 0.43 & 0.59\\
\texttt{hp} & 1.00$\times$ & 1.00$\times$ & 0.93$\times$ & & 102.00 & 0.11 & 0.23\\
\texttt{ami33} & 1.00$\times$ & 1.08$\times$ & 1.01$\times$ & & 13.00 & 0.14 & 0.12\\
\texttt{ami49} & 1.00$\times$ & 1.08$\times$ & 0.97$\times$ & & 73.00 & 1.70 & 1.57\\

\hline
\textbf{Norm. Ratio} & \textbf{1.00$\times$} & \textbf{1.05$\times$} & 
\textbf{0.94$\times$} & &
\textbf{1.00$\times$} & 
\textbf{0.01$\times$} & 
\textbf{0.01$\times$}\\

\hline\hline

\end{tabular}
\endgroup
\end{table}

\begin{figure}[t]
\centering
\includegraphics[scale=0.4]{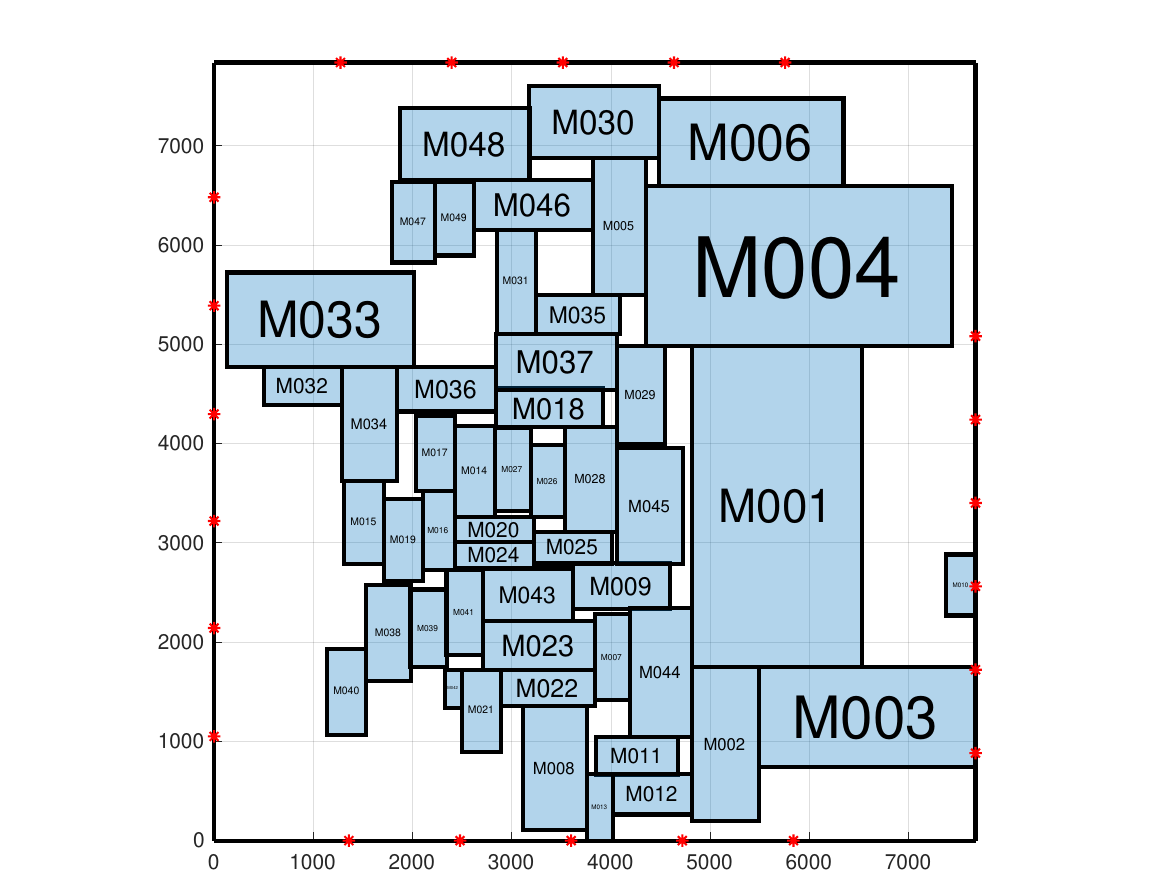} 
\caption{Module packings of \texttt{ami49} for fixed-outline floorplanning without I/O assignment.}
\label{fig:ami49-plot}
\end{figure}

\subsubsection{Results of Floorplanning Considering Soft Modules}
To test the scalability of multiple constraints, we also consider the experiments with soft modules.
Table~\ref{tab:floorplan-soft} presents experimental results for floorplanning with soft modules on MCNC benchmark, which fixes the whitespace at 15\% and the outline aspect ratio as 1:1. Besides, we set the aspect ratio of all soft modules with lower bound $\frac{1}{3}$ and upper bound $3$ as PeF does.

Compared with PeF, Per-RMAP achieves a speed-up of 15\% with comparable HPWL. As an analytical method, although PeF could quickly spread modules under its formulation as the analytical solution is known, it would take some time to keep a trade-off between wirelength objectives and density function. However, our Per-RMAP focuses on feasibility-seeking and improves given objectives during the feasibility-seeking procedure, thus achieving efficiency.

\begin{table}[t] \scriptsize
\renewcommand{\arraystretch}{1.08}
\caption{Experimental results for floorplanning with soft modules, aspect ratio 1:1, whitespace 15\%.}
\label{tab:floorplan-soft} \centering
\begin{tabular}{cccccc}
\hline\hline
\multirow{2}{*}{\textbf{Instance}} & \multicolumn{2}{c}{\textbf{HPWL}}                                      & & \multicolumn{2}{c}{\textbf{Time (sec)}} \\
\cline{2-3} \cline{5-6}
         & PeF~\cite{Li2022} & Per-RMAP & & PeF~\cite{Li2022} & Per-RMAP \\
\hline
\texttt{apte} & 422312 & 390338 & & 1.27 & 0.22\\
\texttt{xerox}& 434925 & 471242 & & 1.07 & 1.35\\
\texttt{hp}   & 150762 & 135778 & & 0.94 & 0.67\\
\texttt{ami33}& 50442 & 46762 & & 0.99 & 0.83\\
\texttt{ami49}& 613665 & 734753 & & 2.49 & 3.18\\
\hline
\textbf{Norm. Ratio} & \textbf{1.00$\times$} & \textbf{1.00$\times$} & & \textbf{1.00$\times$} & \textbf{0.85$\times$} \\
\hline\hline
\end{tabular}
\end{table}

\subsection{Discussion on Effectiveness and Efficiency}
Per-RMAP showcases its effectiveness and efficiency by surpassing some existing approaches in terms of both floorplan quality and execution wall time. Our evaluation is conducted on the GSRC benchmarks for original floorplanning, and the results are shown in Table~\ref{tab:GSRC-floorplan-result-wo-io}. We compare Per-RMAP against Parquet-4~\cite{Adya2003} and Fast-SA~\cite{chen2006}, which are commonly employed in the field and could be implemented in the same settings as ours.

In comparison to Parquet 4.0, our method achieves a 34\% enhancement in HPWL with acceptable time costs. Unlike Parquet 4.0 that uses clustering for efficiency at the cost of quality, our flat approach Per-RMAP excels in quality.

Compared to Fast-SA, our method achieves a 5.6\% runtime of Fast-SA within a 1\% margin in HPWL. Per-RMAP outperforms Fast-SA in finding suboptimal solutions quickly. While Fast-SA focuses on constraint optimization for better quality, it incurs higher time costs. This efficiency of our method stems from our projection-based approach, which is computationally light and focused on feasibility, with slight perturbations to improve solutions.

In summary, our approach provides efficient solutions with competitive HPWL. It is, thus, well-suited for practical floorplanning challenges, where efficiency, solution quality, and the ability to handle larger designs are pivotal considerations. In future work, a block iterative version or string average version could be designed to improve the scalability of the algorithm.

\begin{table*}[ht] \scriptsize
\renewcommand{\arraystretch}{1.08}
\caption{Experimental results for floorplanning on the GSRC benchmark.}
\label{tab:GSRC-floorplan-result-wo-io} \centering
\begin{tabular}{cccccccc}
\hline\hline
\multirow{2}{*}{\textbf{Instance}} & \multicolumn{3}{c}{\textbf{HPWL}}                                       & & \multicolumn{3}{c}{\textbf{Time (sec)}} \\
\cline{2-4} \cline{6-8}
         & Parquet-4~\cite{Adya2003} & Fast-SA\cite{chen2006}  & Per-RMAP & &  Parquet-4~\cite{Adya2003} & Fast-SA\cite{chen2006}  & Per-RMAP \\
\hline
\texttt{n100}  & 420936 & 287646 & 282596 & & 0.33 & 10.72 & 1.01  \\
\texttt{n200}  & 763802 & 516057 & 518722 &  & 1.69 & 69.86 & 2.93\\
\texttt{n300}  & 1010000 &  603811 & 626061 & & 4.81 & 133.63 & 4.11\\
\hline
\textbf{Norm. Ratio} & \textbf{1.00$\times$}  & \textbf{0.65$\times$} &  \textbf{0.66$\times$} & & \textbf{1.00$\times$} & \textbf{33.87$\times$}  & \textbf{1.88$\times$} \\
\hline\hline
\end{tabular}
\end{table*}

\subsection{Ablation Study}\label{subsec:ablation-study}
To evaluate the effectiveness of the proposed Per-RMAP, we conduct the ablation study in this subsection. Here we consider two additional baselines: (a) MAP: the naive MAP algorithm without resetting or perturbations; and (b) RMAP: the MAP algorithm with the proposed resetting strategy.

\textbf{Evaluations of Resetting Strategy:} In our design, the resetting strategy aims to eliminate oscillations and improve the convergence behavior. Therefore, we compare the number of iterations required for convergence and ROA achieved by MAP and RMAP on the synthetic benchmark and the MCNC benchmark. The results are listed in Table~\ref{tab:compare-MAP-and-RMAP}. It is clear that except in the \texttt{hp} case, MAP does not converge in 100 iterations. In contrast, RMAP can find feasible solutions within significantly fewer iterations. This is evident from the reduced number of iterations and the smaller ROA achieved by RMAP compared to MAP. For instance, in the \texttt{n3} and \texttt{n4} instances, the resetting strategy reduces the number of iterations by more than half and achieves a ROA of less than 0.1$\%$, indicating the successful removal of overlaps. 
Similarly, RMAP finds feasible solutions after 33, 35, 19, 50, and 93 iterations on the instances of MCNC benchmarks respectively.

The oscillation can be greatly mitigated due to the resetting strategy. However, it is important to note that the resetting strategy may not eliminate oscillations in some extreme cases such as \texttt{n5} in Appendix~\ref{sec:example-n5}, where the whitespace is zero. In such cases, additional strategies like a multi-start strategy may be necessary to escape the oscillation. The importance of initial point selection on convergence is elaborated in Section~\ref{sec:MAP-property-and-challenges}. Here we do not elaborate on the strategy, as our proposed strategies are effective for most cases.

\begin{table}[t] \scriptsize
\renewcommand{\arraystretch}{1.08}
    \caption{Evaluations of the resetting strategy on the synthetic and MCNC benchmarks.}
    \label{tab:compare-MAP-and-RMAP}
    \centering
    \begin{tabular}{cccccc}
        \hline\hline
        \multirow{2}{*}{\textbf{Instance}} & \multicolumn{2}{c}{\textbf{MAP}} & & \multicolumn{2}{c}{\textbf{RMAP}}\\
        \cline{2-3} \cline{5-6}
         & \textbf{\#Iterations} & \textbf{ROA} & & \textbf{\#Iterations} & \textbf{ROA}\\
        \hline
        \multirow{1}{*}{\texttt{n3}}  & 100 & 1.7\% &  & 31 & \textless \  0.1\%\\
        \multirow{1}{*}{\texttt{n4}}  & 100 & 20.8\% & & 42 & \textless \  0.1\%\\
        \multirow{1}{*}{\texttt{n5}}  & 100 & 11.1\%/22.2\%     
         & & 100 & 11.1\%/22.2\%\\
        \multirow{1}{*}{\texttt{apte}}  & 148 & 4.6\%        
         & & 33 & \textless \  0.1\%\\
        \multirow{1}{*}{\texttt{xerox}}  & 143 & 10.9\%
         & & 35 & \textless \  0.1\%\\
        \multirow{1}{*}{\texttt{hp}} & 38 & \textless \  0.1\%
         & & 19 & \textless \  0.1\%\\
        \multirow{1}{*}{\texttt{ami33}} & 624 & 0.5\%
         & & 50 & \textless \  0.1\%\\
        \multirow{1}{*}{\texttt{ami49}} & 858 & 9.0\%
         & & 93 & \textless \  0.1\%\\
        \hline\hline
    \end{tabular}
\end{table}

\textbf{Evaluations of SM:} 
In the proposed SM algorithm, Per-RMAP, perturbations are applied to guide the FSP-based algorithm to reach a superior point with reduced HPWL. Although the FSP focuses on satisficing rather than optimizing, superiorization makes FSP algorithms competitive with the B\&B method in~\cite{funke2016} which seeks optimality of total wirelength, as shown in Table~\ref{tab:floorplan-result-wo-io-1}. In general, the pure feasibility-seeking algorithm RMAP finds feasible solutions with higher HPWL. Here, with the aid of superiorization, Per-RMAP finds superior feasible solutions. Per-RMAP even halves the HPWL in \texttt{xerox}, \texttt{ami33}, and \texttt{ami49}.
The changes are far less dramatic in \texttt{apte} and \texttt{hp} because the feasible solution found by the RMAP is near optimal and leaves little room for improvement.

\textbf{Comparison of MAP, RMAP, and Per-RMAP on \texttt{ami49}:} 
Fig.~\ref{fig:MAP-RMAP-PerRMAP} illustrates the changes in overlapping area and HPWL over iterations for the \texttt{ami49} benchmark, comparing the performance of MAP, RMAP, and Per-RMAP algorithms.
For MAP, the overlapping area decreases rapidly in the first 100 iterations. However, after that, it starts to oscillate among 14 positions at an average overlapping area of $2.04\times 10^6$ and fails to find a feasible solution.
In contrast, RMAP gradually reduces the overlapping area to zero as iterations progress. However, the HPWL gradually increases to $1.70 \times 10^6$. This suggests that while RMAP successfully finds a feasible solution, it does not effectively reduce the HPWL.

Per-RMAP utilizes the superiorization technique to achieve a remarkable reduction in HPWL, resulting in a value of $6.37\times 10^5$. This improvement comes at the cost of a slower decline in the overlapping area as the algorithm progresses. The projection steps gradually become more dominant, ultimately reaching a feasible solution.
\begin{figure*}[t]
    \centering
    \begin{subfigure}[t]{0.32\textwidth}
        \centering
        \includegraphics[width=\textwidth]{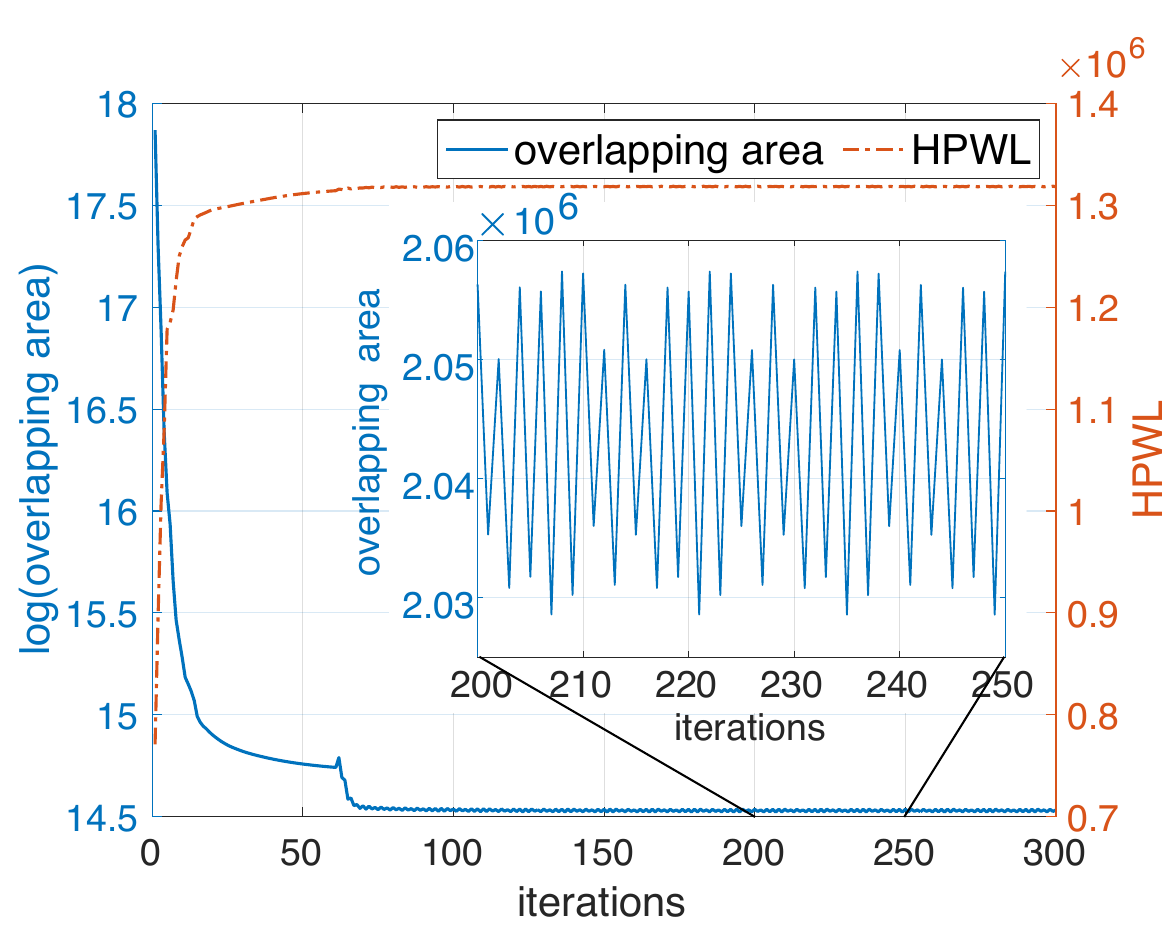}
        \caption{MAP}
    \end{subfigure}
    \hfill
    \begin{subfigure}[t]{0.32\textwidth}
        \centering
        \includegraphics[width=\textwidth]{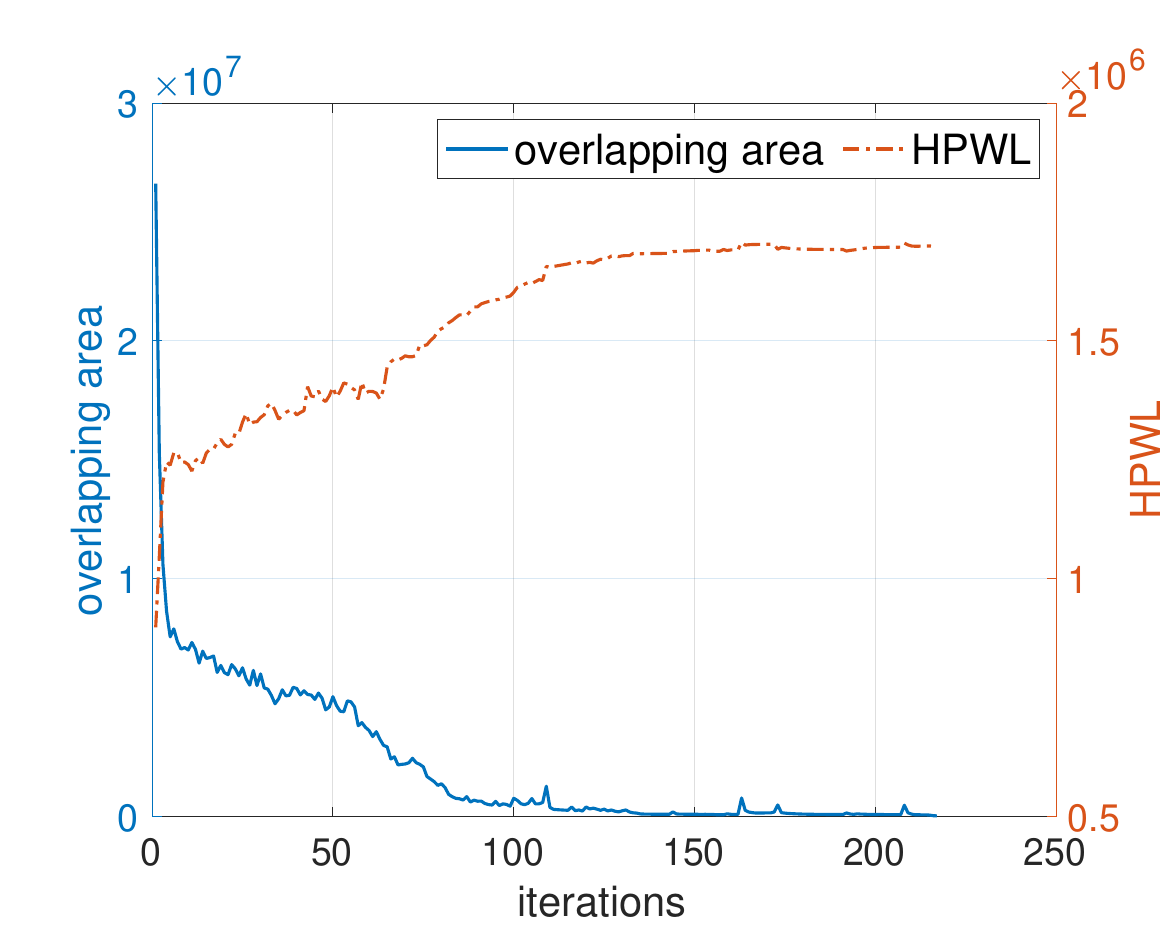}
        \caption{RMAP}
    \end{subfigure}
    \hfill
    \begin{subfigure}[t]{0.32\textwidth}
        \centering
        \includegraphics[width=\textwidth]{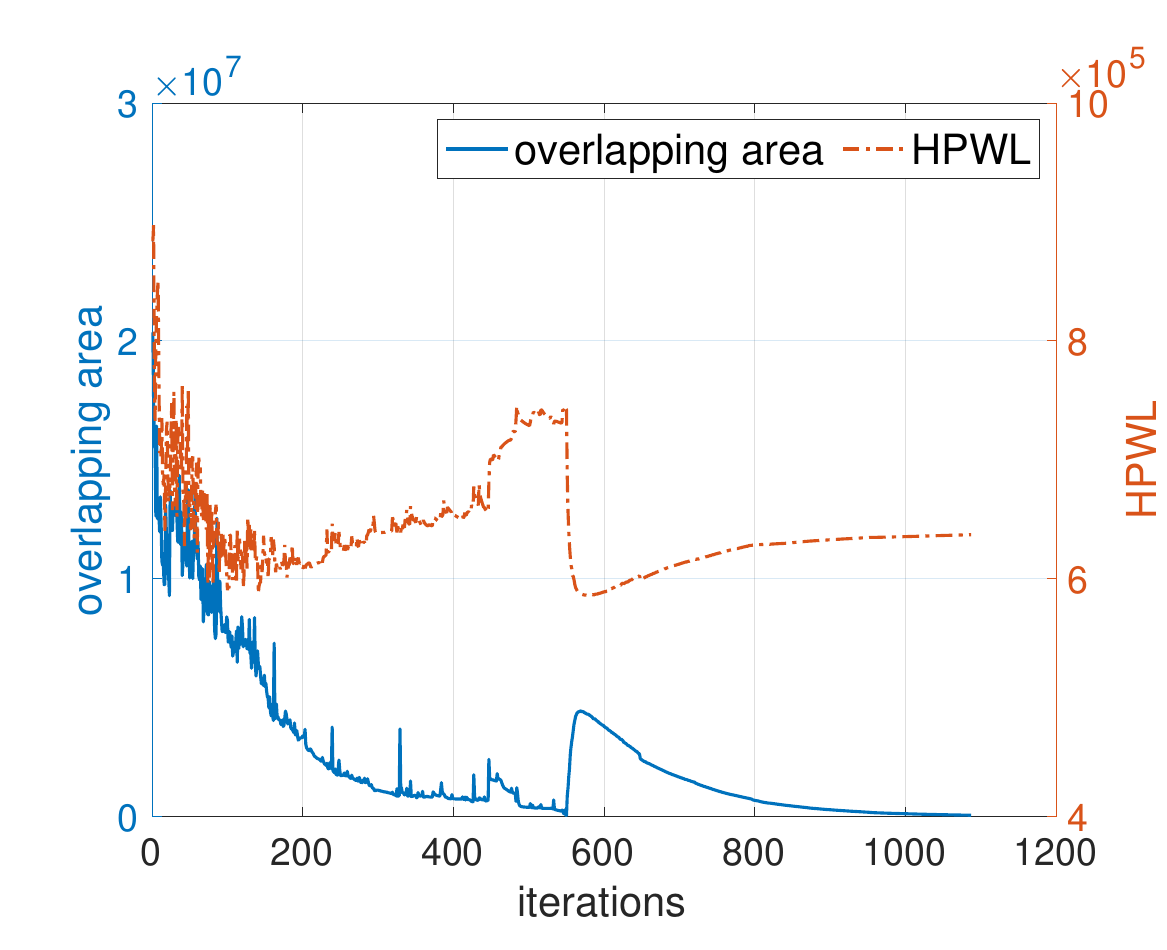}
        \caption{Per-RMAP}
    \end{subfigure}
    \caption{Variation of overlapping area and HPWL over iterations on \texttt{ami49}.}
    \label{fig:MAP-RMAP-PerRMAP}
\end{figure*}

\section{Conclusion and Future Work}\label{sec:conclusion}
We model the fixed-outline floorplanning
as an FSP. However, the conventional
MAP for FSP cannot always obtain legal floorplans because the constraints sets of the floorplanning problem are not convex. We analyze the union convex property of the constraints sets in floorplanning and prove the local convergence of MAP. Besides, we propose the resettable method of alternating projection (RMAP) to improve its initial convergence behavior. Furthermore, a superiorized version, Per-RMAP, is designed to reduce the total wirelength. The experiments show that Per-RMAP achieves nearly optimal results with a speedup of 166$\times$ with a merely 5$\%$ improvement in HPWL compared to the B\&B method. After considering the I/O assignment, Per-RMAP achieves a 6\% decrease in total wirelength. Besides, considering the soft modules case, we obtain 15\% improved efficiency compared with the analytical method PeF. Our experiments show the ability of Per-RMAP to handle larger floorplanning designs while maintaining a good balance between solution quality and efficiency. Our future work is to investigate the capability of handling complex practical constraints, like the ones for 2.5D floorplanning, to further accelerate computations by adopting iterative feasibility-seeking amenable to parallel execution, and modify the algorithm structure to tackle large-scale problems.




\bibliographystyle{IEEEtran}
\bibliography{tcad.bib}


\section{Appendix A: Examples to Illustrate the Limitations of MAP}\label{sec:example-n5}
Example~\ref{eg:n4} in Sec.~\ref{sec:MAP-property-and-challenges} is a representative example that demonstrates the occurrence of oscillations.

Examples~\ref{eg:n3} and \ref{eg:n3v} highlight the challenges of handling oscillations, showing that variations in scanning order and relaxation parameters may not effectively address the problem, especially when the initialization is poor.

\begin{eg}{\bf{\texttt{n3}: 3-module case, oscillation occurs, regardless of scanning order, at a bad initialization.}}\label{eg:n3}
Place three modules with widths $w=(3,4,5)$ and heights $h=(3,4,5)$ into a region with size $(W,H)=(11,11)$. Initialize modules at $z^0=(0,2,6,4,2,0)$ as shown in Fig.~\ref{fig:3-cell-oscillation}(a) and fix the relaxation parameters $\lambda_n$ at 1.

However, even with various scanning orders, oscillation still occur. In one sweep, the scanning order can be determined using the following orders:
\begin{enumerate}
    \item \textbf{Area order:} Scan by the area of modules in non-ascending order, i.e., repeatedly apply $P_{12}P_{13}P_{23}$ in this case. After one sweep, the sequence $\left\{z^n\right\}$ reach the state shown in Fig.~\ref{fig:3-cell-oscillation}(b), where it remains.
    \item \textbf{Position order:} Scan by the positions of modules in non-descending order, i.e., repeatedly apply $P_{23}P_{13}P_{12}$ in this case. The sequence $\left\{z^n\right\}$ stays unchanged at the solution depicted in Fig.~\ref{fig:3-cell-oscillation}(a).  
    \item \textbf{Random order:} Order is generated randomly, then the sequence oscillates between the positions depicted in Fig.~\ref{fig:3-cell-oscillation}(a) and Fig.~\ref{fig:3-cell-oscillation}(b) in random order.
\end{enumerate}

\begin{figure}[ht]
	\centering
	\begin{minipage}{0.3\linewidth}
		\centerline{\includegraphics[width=\textwidth]{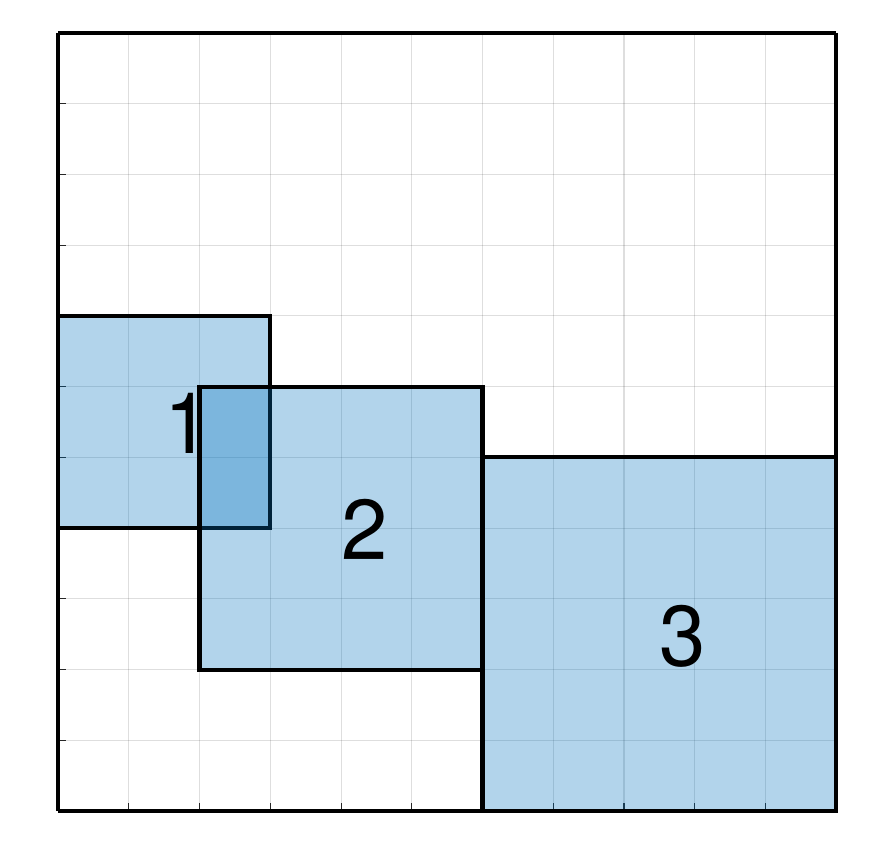}}
		\centerline{(a)}
	\end{minipage}
	\hspace{20pt}
	\begin{minipage}{0.3\linewidth}
		\centerline{\includegraphics[width=\textwidth]{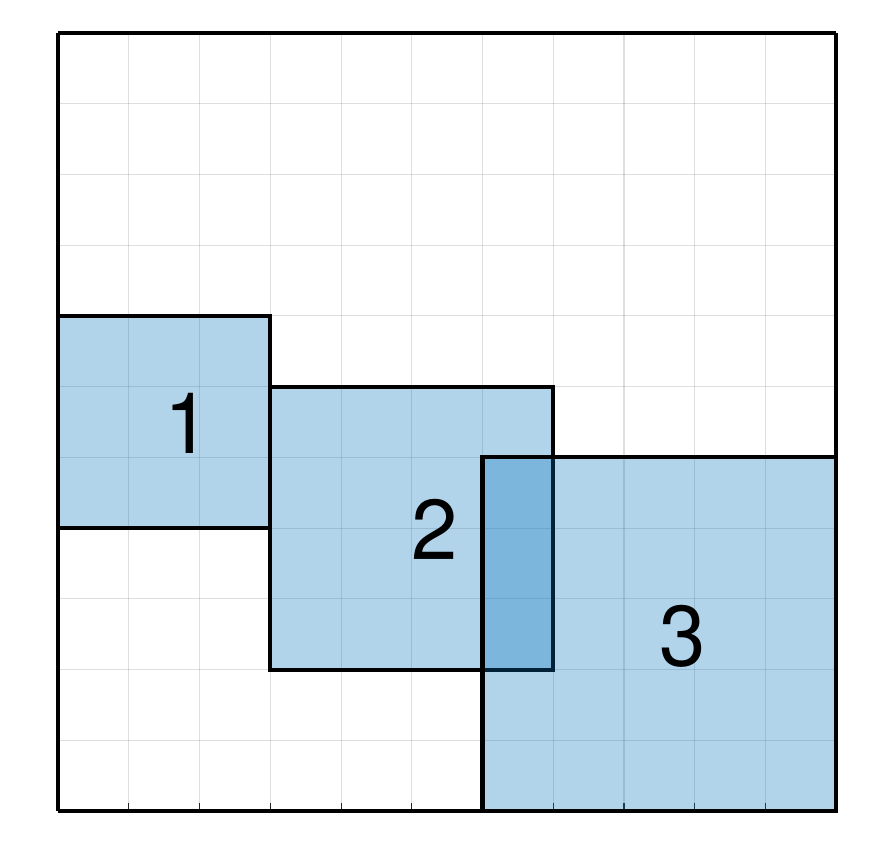}}
		\centerline{(b)}
	\end{minipage}
\caption{Oscillation for \texttt{n3}}
\label{fig:3-cell-oscillation}
\end{figure} 
\end{eg}

\begin{eg}
{\bf{\texttt{n3v}: 3-module case, oscillation occurs regardless of constant relaxation parameter at a bad initialization.}}\label{eg:n3v}
Place three modules with widths $w=(2,2,2)$ and heights $h=(3,4,5)$ within a region of size $(W,H)=(5,11)$. The initial modules positions are set at $z^0=(0,1,3,2,1,0)$ as shown in Fig.~\ref{fig:3-cell-oscillation-varying-lambda}(a). The projection is performed using a constant relaxation parameter $\lambda \in (0,2]$ and the modules are scanned in area order. We consider $\lambda$ in this range as \cite{Yair1997} has proven convergence under the constraints of convex sets. 
\begin{figure}[ht]
\centering
\includegraphics[scale=0.3]{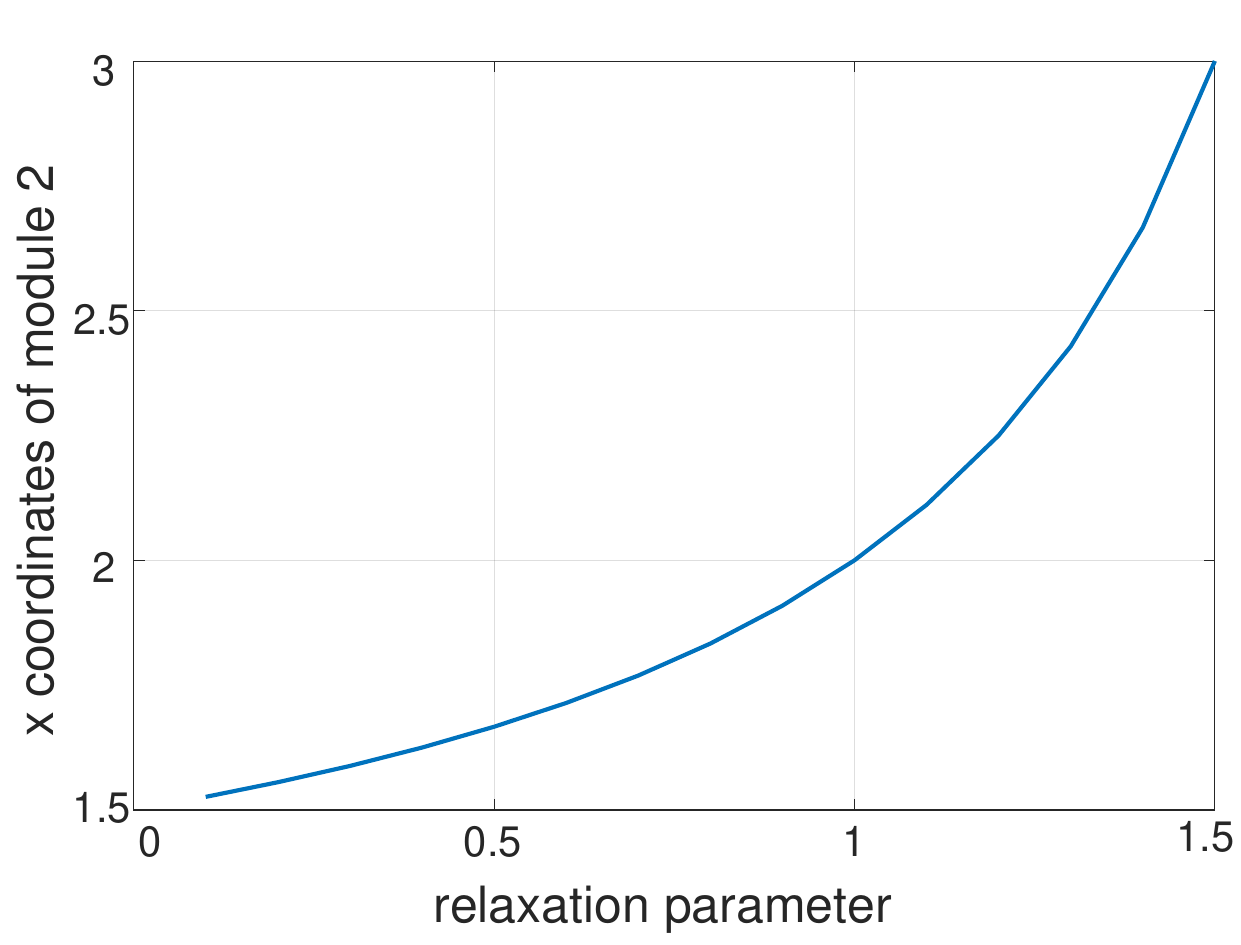}
\caption{Stuck phenomena for \texttt{n3v}: Get stuck at $z=(0,x_2(\lambda),3,2,1,0)$, where $x_2(\lambda)$ denotes the relationship between $x$-coordinate of module 2 and constant relaxation parameter $\lambda \in (0,1.5]$.}%
\label{fig:change-of-x-cell2}%
\end{figure}

Fig.~\ref{fig:change-of-x-cell2} illustrates the relationship between $x_2$ and constant $\lambda \in (0,1.5]$. 
Fig.~\ref{fig:3-cell-oscillation-varying-lambda} shows the initialization ($\lambda = 0$), the stuck position when $\lambda =0.1$, and the stuck position when $\lambda =1.5$.

\begin{figure}[t]
\centering
	\begin{minipage}{0.12\linewidth}
		\centerline{\includegraphics[width=\textwidth]{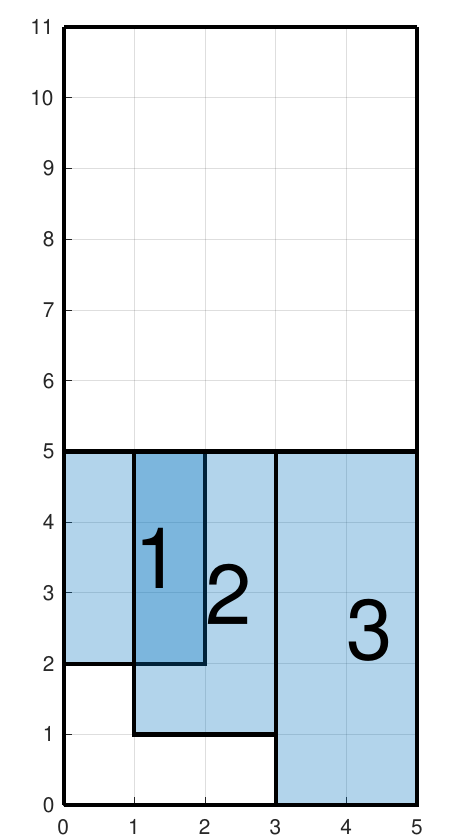}}
		\centerline{(a) $\lambda$ = 0}
	\end{minipage}
	\hspace{10pt}
	\begin{minipage}{0.12\linewidth}
		\centerline{\includegraphics[width=\textwidth]{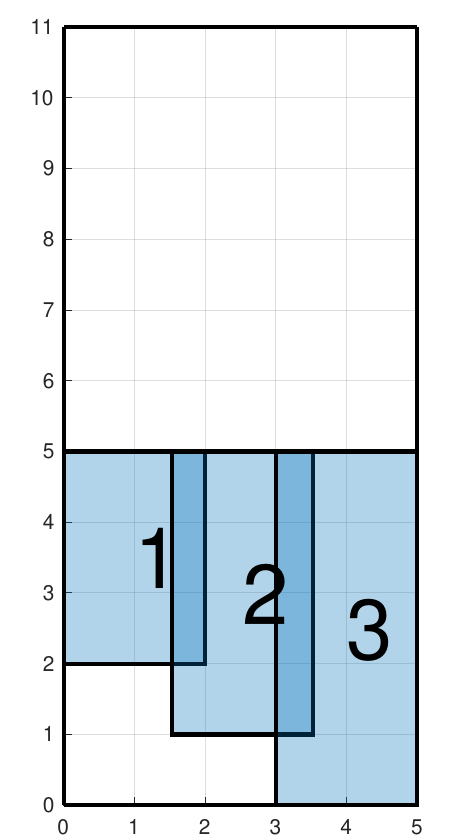}}
		\centerline{(b) $\lambda = 0.1$}
	\end{minipage}
	\hspace{10pt}
	\begin{minipage}{0.12\linewidth}
		\centerline{\includegraphics[width=\textwidth]{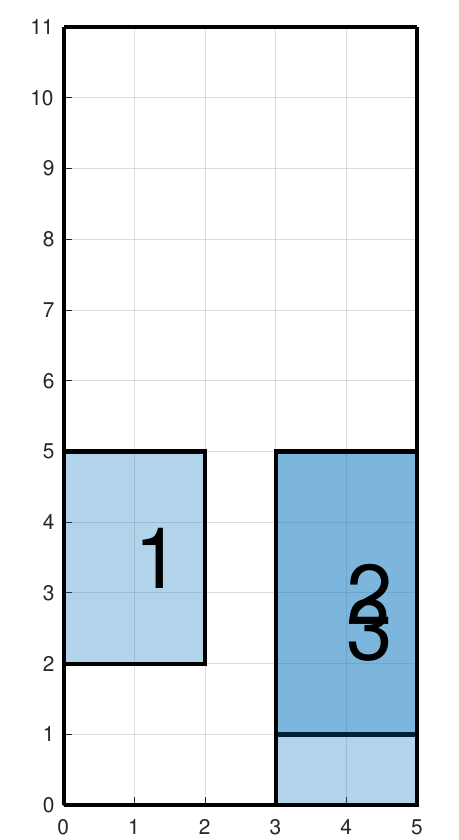}}	 
		\centerline{(c) $\lambda = 1.5$}
	\end{minipage}
	\caption{Stuck phenomena for \texttt{n3v} under different $\lambda$.}
	\label{fig:3-cell-oscillation-varying-lambda}
\end{figure}

For $\lambda \in (1.5, 2]$, MAP exhibits oscillation among four positions, as shown in Fig.~\ref{fig:oscillation-lambda-1.5-2} due to the following reasons:
\begin{enumerate}
    \item Only the $x$-coordinates change: With a maximum overlap of 2 in the $x$-direction and a minimum overlap of 3 in the $y$-direction, favoring the reduction of overlap in the $x$-direction for every pair of modules.
    \item Modules are placed adjacent to the boundary: Modules are bounded and with $\lambda > 1.5$, projections remove at least 3 units of overlap.
    \item Oscillations: It occurs when two modules have an overlap and share the same widths. The effort required to project onto $C_{i,j,\mathsf{L}}$ and $C_{i,j,\mathsf{R}}$, is the same, leading to oscillation.
\end{enumerate}
\begin{figure}[!t]
\centering
\includegraphics[scale=0.2]{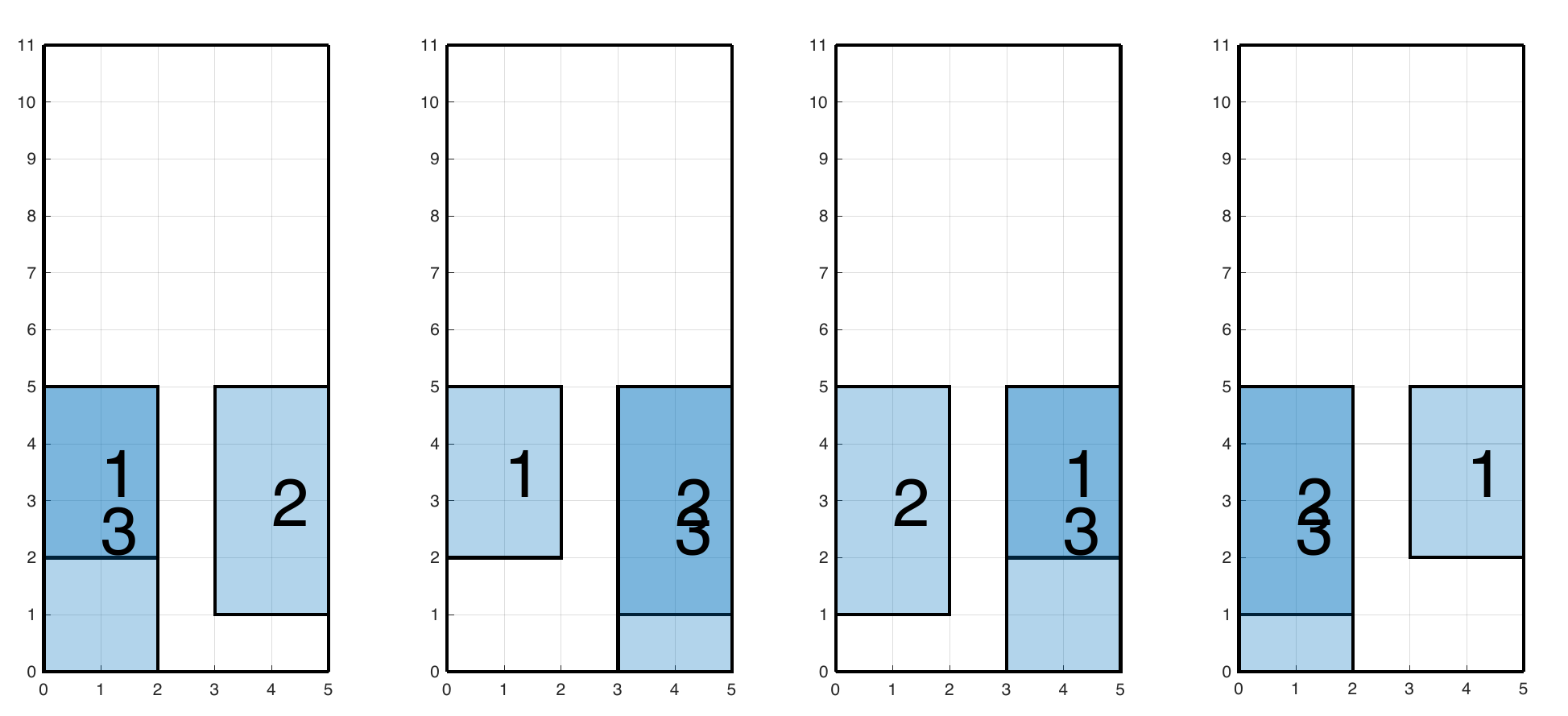} 
\caption{Oscillation for \texttt{n3v} when $\lambda \in (1.5,2]$.}
\label{fig:oscillation-lambda-1.5-2}
\end{figure}
\end{eg}

Furthermore, consider an extreme example \texttt{n5} with zero whitespace. It emphasizes the sensitivity of the problem, where slight variations in initialization can lead to a wide range of explored positions and fail to reach a feasible solution.

\begin{eg}{\bf{\texttt{n5}: 5-module case oscillation.}}\label{eg:n5}
Place five modules with widths $w = (1,2,1,2,1)$ and heights $h = (1,1,2,1,2)$ into a region with size $(W,H)=(3,3)$.
Initialize modules at $z^0=(2,1,1,1,0,1,2,0,0,1)$, fix the relaxation parameter at 1 and scan by position order. The sequence oscillates among 12 positions, as shown in Fig.~\ref{fig:5-cell-oscillations}.

However, if initialized at $\tilde{z}^0=(1,1,1.5,0.5,0, 1,2,0,0,1)$, MAP could find a feasible solution with only a single iteration.
\begin{figure}[t]
\centering
\includegraphics[scale=0.6]{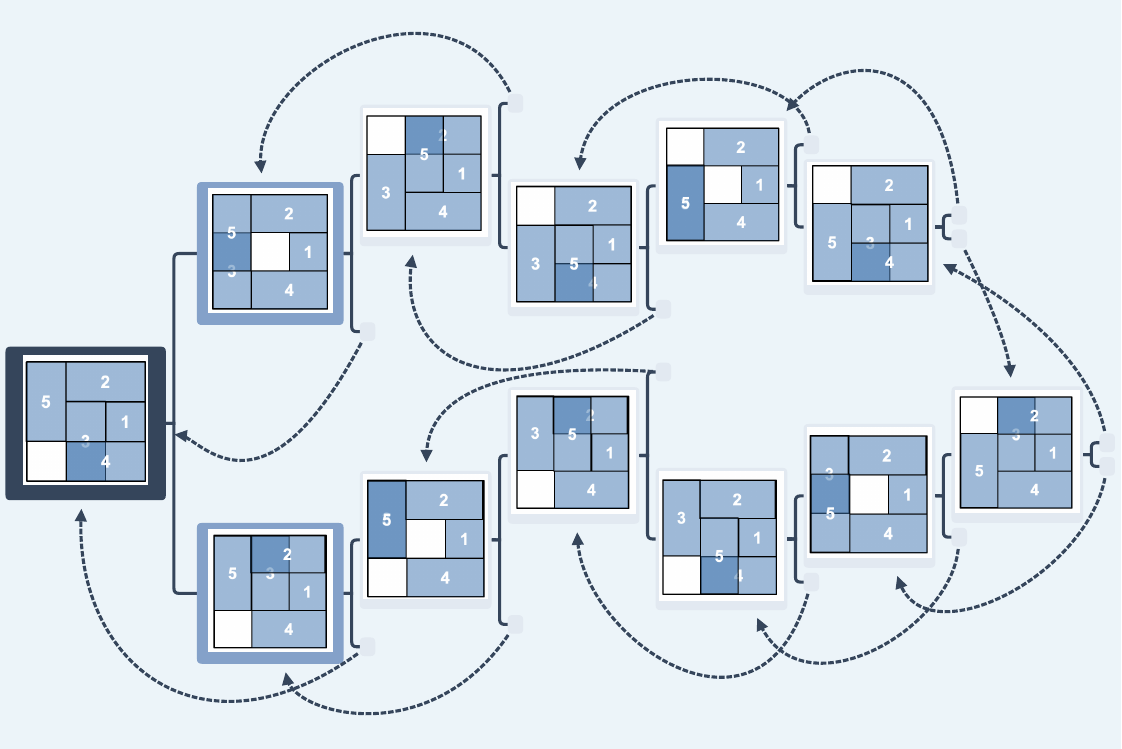} 
\caption{Oscillations among 12 positions.}%
\label{fig:5-cell-oscillations}%
\end{figure}
\end{eg}

\section{Appendix B: More on Local Convergence of MAP}\label{sec:more-local-convergence}

This appendix derives Lemma~\ref{lemma:active2} to replace Lemma~\ref{lemma:active}.

\begin{lemma}
For any $z^* \in F(\mathcal{T}_{t})$, the number of active indices of constraint $C_t$ at $z^*$ is at most two, i.e., $|K_t(z^*)| \in \{1,2\}$.
\end{lemma}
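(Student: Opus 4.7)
The plan is to combine the earlier fixed-point identification with a short case analysis on the four defining inequalities of the convex pieces $C_{t,\mathsf{L}}$, $C_{t,\mathsf{R}}$, $C_{t,\mathsf{B}}$, $C_{t,\mathsf{A}}$. First I would invoke the earlier lemma: $z^* \in F(\mathcal{T}_t)$ implies $z^* \in C_t$, so $d(z^*, C_t) = 0$. Since each $C_{t,k}$ is closed and convex, $P_{t,k}(z^*)$ is a singleton, and $k \in K_t(z^*)$ (i.e.\ $P_{t,k}(z^*) \in \mathcal{P}_t(z^*)$) is equivalent to $d(z^*, C_{t,k}) = 0$, which in turn is equivalent to $z^* \in C_{t,k}$. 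Thus determining $|K_t(z^*)|$ reduces to counting how many of the four pieces contain $z^*$, and the fact that $z^*\in C_t$ immediately gives $|K_t(z^*)| \ge 1$.

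The core step is the pairwise exclusion: I would show that $z^*$ cannot lie in both $C_{t,\mathsf{L}}$ and $C_{t,\mathsf{R}}$, nor in both $C_{t,\mathsf{B}}$ and $C_{t,\mathsf{A}}$. Using the definitions in~\eqref{eq:O_ij^x}, $z^* \in C_{t,\mathsf{L}}$ means $x_i + w_i \le x_j$ while $z^* \in C_{t,\mathsf{R}}$ means $x_j + w_j \le x_i$; summing yields $w_i + w_j \le 0$, which contradicts the standing assumption that module widths are strictly positive. The vertical case is symmetric and gives $h_i + h_j \le 0$, again impossible. Therefore $K_t(z^*)$ contains at most one of $\{\mathsf{L},\mathsf{R}\}$ and at most one of $\{\mathsf{B},\mathsf{A}\}$, so $|K_t(z^*)| \le 2$.

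Combining the two bounds yields the claim $|K_t(z^*)| \in \{1,2\}$. The only subtlety worth stating explicitly is the non-degeneracy hypothesis $w_i, w_j, h_i, h_j > 0$ used in the exclusion step; this is standard in floorplanning but should be cited so that the argument is watertight. I do not anticipate any serious obstacle: once the reduction $K_t(z^*) = \{k : z^* \in C_{t,k}\}$ is in hand via the fixed-point lemma, the remaining argument is a one-line geometric observation about the incompatibility of Left/Right and Below/Above placements of two modules with positive dimensions.
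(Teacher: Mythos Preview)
Your proposal is correct and follows essentially the same approach as the paper: both reduce the question to showing that $C_{t,\mathsf{L}}\cap C_{t,\mathsf{R}}=\emptyset$ and $C_{t,\mathsf{B}}\cap C_{t,\mathsf{A}}=\emptyset$ after identifying $K_t(z^*)$ with the set of pieces containing $z^*$ via the fixed-point characterization. Your argument is in fact slightly more explicit than the paper's, since you derive the disjointness by summing the defining inequalities and invoking positivity of the module dimensions, whereas the paper simply asserts $C_{t,\mathsf{L}}\cap C_{t,\mathsf{R}}=\emptyset$.
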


\begin{proof}
Given $z^* \in F(\mathcal{T}_{t})$, we have $z^* \in \mathcal{T}_{t}(z^*)$, $z^* \in \mathcal{P}_{t}(z^*)$, $z^* \in C_{t}$, and $d(z^*,C_{t})=0$.

Since $C_{t,\mathsf{L}}\cap C_{t,\mathsf{R}} = \emptyset$, we cannot have both $d(z^*,C_{t,\mathsf{L}})=0$ and $d(z^*,C_{t,\mathsf{R}})=0$. Thus, either $P_{t,\mathsf{L}}(z^*) \nsubseteq P_{t}(z^*)$ or $P_{t,\mathsf{R}}(z^*) \nsubseteq P_{t}(z^*)$, i.e., either $\mathsf{L} \notin K_t(z^*)$ or $\mathsf{R} \notin K_t(z^*)$. Similarly, either $\mathsf{B} \notin K_t(z^*)$ or $\mathsf{A} \notin K_t(z^*)$. Therefore, $|K_t(z^*)| \le 2$.
\end{proof}

\begin{lemma}\label{lemma:k1}
Given $C_t$ and $z^*\in F(\mathcal{T}_t)$, there exists $\epsilon > 0$ such that every $z \in B(z^*,\epsilon)$ has $K_t(z) = K_t(z^*)$ when $|K_t(z^*)|=1$. Furthermore, $\epsilon = r_t$ below satisfies this condition:
\begin{align}\label{eq:k1}
r_t = \min\{d_{sep}(z^*,C_t),\ \frac{d_{sep}(z^*,C_t)+d_{esc}(z^*,C_t)}{2}\}.
\end{align}
\end{lemma}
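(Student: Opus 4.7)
The idea is to show that the active-index set is locally constant: every $z$ in the ball $B(z^*, r_t)$ has the same single active index $k_0$ as $z^*$. Unpacking Definition~\ref{def:active-indice}, this amounts to proving the strict inequality $d(z, C_{t,k_0}) < d(z, C_{t,k'})$ for every $k' \neq k_0$. First I would record the consequences of $K_t(z^*) = \{k_0\}$: since $z^* \in F(\mathcal{T}_t) \subset C_t$, we have $z^* \in C_{t,k_0}$ and $d(z^*, C_{t,k'}) \ge d_{sep}(z^*,C_t) > 0$ for $k' \neq k_0$; moreover, $d_{esc}(z^*,C_t)$ equals the distance from $z^*$ to the complement of $C_{t,k_0}$, so the open ball $B(z^*, d_{esc}(z^*,C_t))$ is contained in $C_{t,k_0}$.

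For any $z \in B(z^*, r_t)$ and $k' \ne k_0$, the triangle inequality yields the lower bound $d(z, C_{t,k'}) \ge d_{sep}(z^*,C_t) - \|z - z^*\|$. The heart of the argument is the matching upper bound on $d(z, C_{t,k_0})$, which I would obtain by a case split. \emph{Case 1:} if $z \in C_{t,k_0}$ (automatic when $\|z - z^*\| < d_{esc}(z^*,C_t)$), then $d(z, C_{t,k_0}) = 0$ and the constraint $r_t \le d_{sep}(z^*,C_t)$ together with the openness of the ball already gives the desired strict inequality. \emph{Case 2:} if $z \notin C_{t,k_0}$, I would invoke convexity: the segment from $z^*$ to $z$ exits $C_{t,k_0}$ at a boundary point $y$, and because $B(z^*, d_{esc}(z^*,C_t))$ sits inside $C_{t,k_0}$, one must have $\|y - z^*\| \ge d_{esc}(z^*,C_t)$, whence $d(z, C_{t,k_0}) \le \|z - y\| \le \|z - z^*\| - d_{esc}(z^*,C_t)$. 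Combining, the target inequality reduces to $\|z - z^*\| < (d_{sep}(z^*,C_t) + d_{esc}(z^*,C_t))/2$, which is exactly the second term in the definition of $r_t$ in~\eqref{eq:k1}, and openness of the ball again supplies the strictness.

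I expect the main obstacle to be the geometric estimate $d(z, C_{t,k_0}) \le \|z - z^*\| - d_{esc}(z^*,C_t)$ of Case 2. Although it is essentially a one-dimensional fact along the segment $[z^*, z]$, it rests on the existence of the exit point $y$ (using closedness and convexity of $C_{t,k_0}$) and on the characterization of $d_{esc}(z^*,C_t)$ as the radius of the largest open ball around $z^*$ contained in $C_{t,k_0}$. Once this bound is secured, the remainder is a tidy verification that the two terms in the minimum defining $r_t$ together cover both cases: $r_t \le d_{sep}(z^*,C_t)$ handles Case 1 and $r_t \le (d_{sep}(z^*,C_t) + d_{esc}(z^*,C_t))/2$ handles Case 2, so that the explicit value of $r_t$ in \eqref{eq:k1} witnesses the existence statement.
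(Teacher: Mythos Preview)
Your proposal is correct and uses the same ingredients as the paper's proof---the triangle-inequality lower bound $d(z,C_{t,k'})\ge d_{sep}-\|z-z^*\|$ together with the containment $B(z^*,d_{esc})\subset C_{t,k_0}$ to control $d(z,C_{t,k_0})$. The only difference is organizational: the paper splits on whether $d_{sep}\le d_{esc}$ (so that $r_t$ equals the first or the second term of the $\min$), while you split on whether the point $z$ itself lies in $C_{t,k_0}$; both decompositions lead to the same two inequalities and the same value of $r_t$.
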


\begin{proof}
Let $\{k\} = K_{t}(z^*)$ and $k' \in \left\{\mathsf{L},\mathsf{R},\mathsf{B},\mathsf{A}\right\}/\{k\}$.

When there is no ambiguity, we denote  $d_{sep}(z^*,C_t)$ and $d_{sep}(z^*,C_t)$ as $d_{sep}$ and $d_{esc}$, respectively, in this proof.

If $d_{sep} \le d_{esc}$, every $z \in B(z^*, d_{sep})$ has $K_t(z) = k$, since $z$ does not move far enough to escape from $C_{t,k}$ and arrive at any other $C_{t,k'}$, i.e., $d(z,C_{t,k}) = 0 < d(z,C_{t,k'})$.

If $d_{sep} > d_{esc}$, every $z \in B(z^*, (d_{sep}+d_{esc})/2)$ has $K_t(z) = k$, since
$d(z,C_{t,k}) < (d_{sep}+d_{esc})/2 - d_{esc} = (d_{sep}-d_{esc})/2$ and $d(z,C_{t,k'}) > d_{sep} - (d_{sep}+d_{esc})/2 = (d_{sep}-d_{esc})/2$.

Therefore, every $z \in B(z^*,r_t)$, with the $r_t$ defined in~\eqref{eq:k1}, has $K_t(z)=K_t(z^*)$ when $|K_t(z^*)|=1$.
\end{proof}

\begin{lemma}\label{lemma:k2}
Given $C_t$ and $z^*\in F(\mathcal{T}_t)$, there exists $\epsilon > 0$ such that every $z \in B(z^*,\epsilon)$ has $K_t(z) \subseteq K_t(z^*)$ when $|K_t(z^*)|=2$. Furthermore, $\epsilon = r_t$ below satisfies this condition:
\begin{align}
\begin{split}
& r_t = \min\{g(x_i^*,x_j^*,w_i,w_j), g(y_i^*,y_j^*,h_i,h_j)\}, \\
\end{split}
\end{align}
where $C_t=C_{i,j}$ and $g(a,b,c,d)=|(a-b)+(c-d)/2|/\sqrt{2}$.
\end{lemma}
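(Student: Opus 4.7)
The plan is to prove $K_t(z) \subseteq K_t(z^*)$ by ruling out every index outside $K_t(z^*)$ from becoming active at any $z \in B(z^*, r_t)$. First I would invoke the preceding sub-lemma together with the pairwise disjointness $C_{t,\mathsf{L}} \cap C_{t,\mathsf{R}} = \emptyset$ and $C_{t,\mathsf{B}} \cap C_{t,\mathsf{A}} = \emptyset$; these force the two active indices at $z^*$ to come from different pairs. Without loss of generality I take $K_t(z^*) = \{\mathsf{L}, \mathsf{B}\}$, so the task reduces to establishing $d(z, C_{t,\mathsf{R}}) > d(z, C_{t,\mathsf{L}})$ and $d(z, C_{t,\mathsf{A}}) > d(z, C_{t,\mathsf{B}})$ for every $z$ sufficiently close to $z^*$.

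The key geometric observation involves only the $(x_i, x_j)$ coordinates. The sets $C_{t,\mathsf{L}}$ and $C_{t,\mathsf{R}}$ lie in the disjoint half-spaces $\{x_j - x_i \ge w_i\}$ and $\{x_j - x_i \le -w_j\}$, which are symmetrically placed about the midplane $H_x := \{x_j - x_i = (w_i - w_j)/2\}$. A short computation shows that the Euclidean distance in $\mathbb{R}^{2N}$ from any point to $H_x$ equals $|(x_i - x_j) + (w_i - w_j)/2|/\sqrt{2}$, so the distance from $z^*$ to $H_x$ is exactly $g(x_i^*, x_j^*, w_i, w_j)$. Since $z^* \in C_{t,\mathsf{L}}$ forces $x_j^* - x_i^* \ge w_i > (w_i - w_j)/2$, the point $z^*$ lies strictly on the L-side of $H_x$, and the reverse triangle inequality keeps every $z$ with $\|z - z^*\| < g(x_i^*, x_j^*, w_i, w_j)$ strictly on the same side (its distance to $H_x$ is bounded below by $g - \|z-z^*\| > 0$). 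Because $H_x$ is equidistant to the two half-spaces, this positioning yields the strict inequality $d(z, C_{t,\mathsf{L}}) < d(z, C_{t,\mathsf{R}})$, eliminating $\mathsf{R}$ from $K_t(z)$.

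A completely analogous argument in $(y_i, y_j)$, using the midplane $\{y_j - y_i = (h_i - h_j)/2\}$, rules out $\mathsf{A}$ once $\|z - z^*\| < g(y_i^*, y_j^*, h_i, h_j)$. Taking $r_t$ to be the minimum of the two thresholds simultaneously prevents both $\mathsf{R}$ and $\mathsf{A}$ from entering $K_t(z)$, delivering $K_t(z) \subseteq \{\mathsf{L}, \mathsf{B}\} = K_t(z^*)$. The step I expect to require the most care is justifying that the box-constraint factor $B_{i,j}$ embedded in each $C_{t,k} = O_{i,j}^{\cdot} \cap B_{i,j}$ does not interfere with the half-space distance comparison above: the argument effectively treats $d(z, C_{t,\mathsf{L}})$ and $d(z, C_{t,\mathsf{R}})$ as distances to pure half-spaces, which is valid as soon as the half-space projections of $z$ land inside $B_{i,j}$. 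For $z^*$ in the relative interior of $B_{i,j}$ this is automatic once $z$ is close enough; when $z^*$ lies on a face of $B_{i,j}$ the midplane argument must be restricted to that face, possibly shrinking $r_t$ in a controlled way.
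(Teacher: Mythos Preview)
Your approach is essentially the paper's: both identify the separating hyperplanes $\mathcal{V}_{t,\mathsf{L,R}}=\{x_i-x_j+(w_i-w_j)/2=0\}$ and $\mathcal{V}_{t,\mathsf{B,A}}$, compute $d(z^*,\mathcal{V})=g(\cdot)$, and argue that any $z\in B(z^*,r_t)$ remains on the same side of both hyperplanes as $z^*$, so the opposite indices cannot become active. The paper's proof does not engage with the box-constraint issue you flag in your final paragraph; it simply asserts that ``remaining on the same side of the two hyperplanes'' means ``no new active indices would be created,'' which implicitly treats each $C_{t,k}=O^{\cdot}_{i,j}\cap B_{i,j}$ as the bare half-space $O^{\cdot}_{i,j}$. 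Your caution is warranted---being on the $\mathsf{L}$-side of $\mathcal{V}_{t,\mathsf{L,R}}$ does \emph{not} by itself force $d(z,C_{t,\mathsf{L}})\le d(z,C_{t,\mathsf{R}})$ once the box is intersected (one can place $z$ on the $\mathsf{L}$-side but far outside $B_{i,j}$ so that the clipped triangle $C_{t,\mathsf{R}}$ is nearer)---but these pathological points appear to lie outside the ball $B(z^*,r_t)$ because $z^*\in C_{t,\mathsf L}\subset B_{i,j}$ anchors the ball to the box; so the stated $r_t$ is likely still valid, just not for the one-line reason the paper gives.
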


\begin{proof}
Assume $C_t$ is the non-overlap constraint between module $m_i$ and $m_j$, i.e., $C_t=C_{i,j}$.
The subsets $C_{t,\mathsf{L}}$ and $C_{t,\mathsf{R}}$ are separated by the hyperplane $\mathcal{V}_{t,\mathsf{L,R}} = \{z\ |\ x_i-x_j + (w_i-w_j)/2=0\}$ with identical distance from these two sets. Similarly, $C_{t,\mathsf{B}}$ and $C_{t,\mathsf{A}}$ are divided by the hyperplane $\mathcal{V}_{t,\mathsf{B,A}} = \{z\ |\ y_i-y_j + (h_i-h_j)/2=0\}$, while maintaining the same distance to each set. 
Since $d(z^*,\mathcal{V}_{t,\mathsf{L,R}})=g(x_i^*,x_j^*,w_i,w_j)$ and $d(z^*,\mathcal{V}_{t,\mathsf{B,A}})=g(y_i^*,y_j^*,h_i,h_j)$, if $r_t = \min\{d(z^*,\mathcal{V}_{t,\mathsf{L,R}}),d(z^*,\mathcal{V}_{t,\mathsf{B,A}})\}$, then every $z \in B(z^*,r_t)$ remains in the same side of the two hyperplanes as $z^*$, so no new active indices would be created. As a result, $K_t(z)\subseteq K_t(z^*)$.
\end{proof}

\begin{lemma}\label{lemma:active2}
Given $C_t$ and $z^*\in F(\mathcal{T}_t)$, every $z \in B(z^*,r_t)$ has $K_t(z) \subseteq K_t(z^*)$, where

\begin{align}
\begin{split}
r_t \! = &\!
\begin{cases}
\min\{d_{sep}(z^*,C_t),\ \frac{d_{sep}(z^*,C_t)+d_{esc}(z^*,C_t)}{2}\},\\
\qquad \qquad \qquad \qquad \qquad \qquad \qquad
\text{ if } \vert K_t(z^*)\vert \! =\! 1,\\
\min\{g(x_i^*,x_j^*,w_i,w_j), g(y_i^*,y_j^*,h_i,h_j)\}, \\
\qquad \qquad \qquad \qquad \qquad \qquad \qquad
\text{ if } \vert K_t(z^*)\vert \! = \! 2,
\end{cases}\\
\text{with } & C_t = C_{i,j}.
\end{split}
\end{align}
\end{lemma}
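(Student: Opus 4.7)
The plan is to reduce Lemma~\ref{lemma:active2} to the two preceding case-specific lemmas by splitting on the cardinality of $K_t(z^*)$. From the earlier lemma we know $|K_t(z^*)|\in\{1,2\}$, so these two cases exhaust all possibilities. In each case, the stated $r_t$ is exactly the radius produced by the corresponding lemma, so the conclusion $K_t(z)\subseteq K_t(z^*)$ follows immediately.

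In the case $|K_t(z^*)|=1$, I would simply invoke Lemma~\ref{lemma:k1}, which in fact gives the stronger conclusion $K_t(z)=K_t(z^*)$. The definition of $r_t$ in that case matches the formula in the present lemma, so every $z\in B(z^*,r_t)$ has $K_t(z)=K_t(z^*)\supseteq K_t(z)$, establishing the containment. In the case $|K_t(z^*)|=2$, I would invoke Lemma~\ref{lemma:k2}: with $C_t=C_{i,j}$, the two separating hyperplanes $\mathcal{V}_{t,\mathsf{L,R}}$ and $\mathcal{V}_{t,\mathsf{B,A}}$ have distance $g(x_i^*,x_j^*,w_i,w_j)$ and $g(y_i^*,y_j^*,h_i,h_j)$ to $z^*$ respectively, so choosing $r_t$ to be the smaller of these guarantees that no $z\in B(z^*,r_t)$ crosses either hyperplane, hence no previously non-active subset becomes active, giving $K_t(z)\subseteq K_t(z^*)$.

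The step that requires the most care is confirming that Lemma~\ref{lemma:k2} truly yields only $K_t(z)\subseteq K_t(z^*)$ and not equality: an active index at $z^*$ may stop being active at a nearby $z$ (for instance, if $z^*$ lies on the boundary between $C_{t,\mathsf{L}}$ and $C_{t,\mathsf{A}}$ but $z$ lies strictly inside only one of them), but no new active index can appear because $z$ stays on the same side of both separating hyperplanes. This asymmetry is exactly what forces the ``$\subseteq$'' in the statement rather than ``$=$''.

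The main obstacle, more presentational than mathematical, will be organizing the case split cleanly so that the piecewise definition of $r_t$ is transparent and the reader sees that in each branch the chosen radius is precisely the one delivered by the appropriate sub-lemma. Once that is set up, the proof is a direct appeal to Lemma~\ref{lemma:k1} and Lemma~\ref{lemma:k2}, together with the observation that $|K_t(z^*)|\in\{1,2\}$ covers all fixed points.
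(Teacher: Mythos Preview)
Your proposal is correct and matches the paper's approach exactly: the paper's proof of this lemma is the single sentence ``It is straightforward to derive from Lemma~\ref{lemma:k1} and~\ref{lemma:k2},'' which is precisely the case split on $|K_t(z^*)|\in\{1,2\}$ that you describe. Your additional remarks on why only ``$\subseteq$'' holds in the $|K_t(z^*)|=2$ case and on the exhaustiveness of the split are accurate elaborations of what the paper leaves implicit.
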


\begin{proof}
It is straightforward to derive from Lemma~\ref{lemma:k1} and~\ref{lemma:k2}.
\end{proof}

\end{CJK*}
\end{document}